\newtheorem{thm}{Theorem}[section]
\newtheorem{lem}[thm]{Lemma}
 \newcommand{\thmref}[1]{Theorem~\ref{#1}}
 \newcommand{\lemref}[1]{Lemma~\ref{#1}}
\newcommand{\ol}{\overline}
\newcommand{\la}{{\langle}}
\newcommand{\ra}{{\rangle}}
\newcommand{\dl}{{\delta}}
\newcommand{\bee}{\begin{equation*}}
\newcommand{\eee}{\end{equation*}}
\newcommand{\be}{\begin{equation}}
\newcommand{\ee}{\end{equation}}
\newcommand{\pn}{\par\noindent}
\title{An iterative method for solving Fredholm integral equations of the first kind}
\author{Sapto W. Indratno \\
\small Department of Mathematics\\[-0.8ex]
\small Kansas State University, Manhattan, KS 66506-2602, USA\\
\small \texttt{sapto@math.ksu.edu}
\and
A.G. Ramm$^*$\\
\small Department of Mathematics\\[-0.8ex]
\small Kansas State University, Manhattan, KS 66506-2602, USA\\[-0.8ex]
\small \texttt{ramm@math.ksu.edu}\\
$^*$Corresponding author
}
\date{}
\begin{document}

\maketitle

\begin{abstract}
The purpose of this paper is to give a convergence analysis of the
iterative scheme: \bee
u_n^\dl=qu_{n-1}^\dl+(1-q)T_{a_n}^{-1}K^*f_\dl,\quad u_0^\dl=0,\eee
where $T:=K^*K,\quad T_a:=T+aI,\quad q\in(0,1),\quad
a_n:=\alpha_0q^n,\ \alpha_0>0,$ with finite-dimensional
approximations of $T$ and $K^*$ for solving stably Fredholm integral
equations of the first kind with noisy data.
\end{abstract}
\pn{\\ {\em MSC:} 15A12; 47A52; 65F05; 65F22  \\

\noindent\textbf{Keywords:} Fredholm integral equations of the first
kind, iterative regularization, variational regularization;
discrepancy
principle; Dynamical Systems Method (DSM) }\\

\noindent\textbf{Biographical notes:} Professor Alexander G. Ramm is
an author of more than 580 papers, 2 patents, 12 monographs, an
editor of 3 books, and an associate editor of several mathematics
and computational mathematics Journals. He gave more than 135
addresses at various Conferences, visited many Universities in
Europe, Africa, America, Asia, and Australia. He won Khwarizmi Award
in Mathematics, was Mercator Professor, Distinguished Visiting
Professor supported by the Royal Academy of Engineering, invited
plenary speaker at the Seventh PanAfrican Congress of
Mathematicians, a London Mathematical Society speaker, distinguished
HKSTAM speaker, CNRS research professor, Fulbright professor in
Israel, distinguished Foreign professor in Mexico and Egypt. His
research interests include inverse and ill-posed problems,
scattering theory, wave propagation, mathematical physics,
differential and integral equations, functional analysis, nonlinear
analysis, theoretical numerical analysis, signal processing, applied
mathematics and operator theory.\\

\noindent Sapto W. Indratno is currently a PhD student at Kansas
State University under the supervision of Prof. Alexander G. Ramm.
He is a coauthor of three accepted papers. His fields of interest
are numerical analysis, optimization, stochastic processes, inverse
and ill-posed problems, scattering theory, differential equations
and applied mathematics.

\section{Introduction}
We consider a linear operator
\begin{equation}\label{11}
(Ku)(x):=\int_a^bk(x,z)u(z)dz=f(x),\quad a\leq x\leq b,
\end{equation}
where $K:L^2[a,b]\to L^2[a,b]$ is a linear compact operator. We
assume that $k(x,z)$ is a smooth function on $[a,b]\times[a,b]$.
Since $K$ is compact, the problem of solving equation \eqref{11} is
ill-posed. Some applications of the Fredholm integral equations of
the first kind can be found in \cite{VIVT02}, \cite{RAMM05},
\cite{RAMM499}. There are many methods for solving equation
\eqref{11}: variational regularization, quasi-solution, iterative
regularization, the Dynamical Systems Method (DSM). A detailed
description of these methods can be found in \cite{MRZ84},
\cite{RAMM05}, \cite{RAMM499}. In this paper we propose an iterative
scheme for solving equation \eqref{11} based on the DSM. We refer
the reader to \cite{RAMM05} and \cite{RAMM499} for a detailed
discussion of the DSM. When we are trying to solve \eqref{11}
numerically, we need to carry out all the computations with
finite-dimensional approximation $K_m$ of the operator $K$,
$\lim_{m\to \infty}\|K_m-K\|=0$. One approximates a solution to
\eqref{11} by a linear combination of basis functions
$v_m(x):=\sum_{i=1}^m \zeta_j^{(m)}\phi_j(x)$, where $\zeta_j^{(m)}$
are constants, and $\phi_i(x)$ are orthonormal basis functions in
$L^2[0,1]$. Here the constants $\zeta_j^{(m)}$ can be obtained by
solving the ill-conditioned linear algebraic system: \be\label{12}
\sum_{j=1}^m(K_m)_{ij}\zeta_j=g_i,\quad i=1,2,\hdots,m, \ee where
$(K_m)_{ij}:=\int_a^b\int_a^bk(x,s)\phi_j(s)ds\ol{\phi_i(x)}dx$,
$1\leq i,j\leq m$, and $g_i:=\int_a^b f(x)\ol{\phi_i(x)}dx$. In
applications, the exact data $f$ may not be available, but noisy
data $f_\dl$, $\|f_\dl-f\|\leq \dl$, are available. Therefore, one
needs a regularization method to solve stably equation \eqref{12}
with the noisy data $g_i^\dl:=\int_a^bf_\dl(x)\ol{\phi_i(x)}dx$ in
place of $g_i$. In the variational regularization (VR) method for a
fixed regularization parameter $a>0$ one obtains the coefficients
$\zeta_j^{(m)}$ by solving the linear algebraic system:
\be\label{13}
a\zeta_i^{(m)}+\sum_{j=1}^m(K_m^*K_m)_{ij}\zeta_j^{(m)}=g_i^\dl,\quad
i=1,2,\hdots,m, \ee where
$$(K^*_mK_m)_{ij}:=\int_a^b\int_a^b\overline{k(s,x)\phi_i(x)}\int_a^bk(s,z)\phi_j(z)dzdsdx,$$ $\|f-f_\dl\|\leq \dl$, and $\overline{k(s,x)}$ is the complex conjugate of $k(s,x)$.
In the VR method one has to choose the regularization parameter $a$.
In \cite{MRZ84} the Newton's method is used to obtain the parameter
$a$ which solves the following nonlinear equation: \be\label{nevr}
F(a):=\|K_m\zeta_m-g^\dl\|^2=(C\dl)^2,\quad C\geq 1, \ee where
$\zeta_m=(aI+K_m^*K_m)^{-1}K_m^*g^\dl,$ and $K^*_m$ is the adjoint
of the operator $K_m$. In \cite{SWIAGR09} the following iterative
scheme for obtaining the coefficients $\zeta_j^{(m)}$ is studied:
\be\label{it1a}
\zeta_{n,m}^\dl=q\zeta_{n-1,m}^\dl+(1-q)T_{a_n,m}^{-1}K_m^*g^\dl,\quad
d_0^\dl=0,\quad a_n:=\alpha_0q^n, \ee where $\alpha_0>0,$
$q\in(0,1),$  \be\label{Tm} T_{a,m}:=T^{(m)}+aI,\quad
T^{(m)}:=K_m^*K_m,\quad a>0, \ee and $I$ is the identity operator.
Iterative scheme \eqref{it1a} is derived from a DSM solution of
equation \eqref{11} obtained in \cite[p.44]{RAMM05}. In iterative
scheme \eqref{it1a} adaptive regularization parameters $a_n$ are
used. A discrepancy-type principle for DSM is used to define the
stopping rule for the iteration processes.

The value of the parameter $m$ in \eqref{nevr} and \eqref{it1a} is
fixed at each iteration, and is usually large. The method for
choosing the parameter $m$ has not been discussed in
\cite{SWIAGR09}. In this paper we choose the parameter $m$ as a
function of the regularization parameter $a_n$, and approximate the
operator $T:=K^*K$ (respectively $K^*$) by a finite-rank operator
$T^{(m)}$ (respectively $K_m^*$): \be\label{c1} \lim_{m\to
\infty}\|T^{(m)}-T\|=0. \ee Condition \eqref{c1} can be satisfied by
approximating the kernel $g(x,z)$ of $T$, \be\label{gxz}
g(x,z):=\int_a^b\overline{k(s,x)}k(s,z)ds ,\ee with the degenerate
kernel \be\label{gm}
g_m(x,z):=\sum_{i=1}^mw_i\overline{k(s_i,x)}k(s_i,z), \ee where
$\{s_i\}_{i=1}^m$ are the collocation points, and $w_i$,$1\leq i
\leq m,$ are the quadrature weights. Quadrature formulas \eqref{gm}
can be found in \cite{PDPR84}. Let $K_m^*$ be a finite-dimensional
approximation of $K^*$ such that \be\label{c2} \lim_{m\to
\infty}\|K^*-K_m^*\|=0. \ee One may choose $K_m^*=P_mK^*$, where
$P_m$ is a sequence of orthogonal projection operators on $L^2[a,b]$
such that $P_mx\to x\text{ as } m\to \infty,$ $\forall x\in
L^2[a,b]$. We propose the following iterative scheme: \be\label{it3}
u_{n,m_n}^\dl=qu_{n-1,m_{n-1}}^\dl+(1-q)T_{a_n,m_n}^{-1}K_{m_n}^*f_\dl,\quad
u_{0,m_0}^\dl=0, \ee where $a_n:=\alpha_0q^n,$ $\alpha_0>0,$
$q\in(0,1),$ $\|f_\dl-f\|\leq \dl$, $T_{a,m}$ is defined in
\eqref{Tm} with $T^{(m)}$ satisfying condition \eqref{c1}, $K_m^*$
is chosen so that condition\eqref{c2} holds, and $m_n$ in
\eqref{it3} is a parameter which measures the accuracy of the
finite-dimensional approximations $T^{(m_n)}$ and $K^*_{m_n}$ at the
$n-$th iteration. We propose a rule for choosing the parameters
$m_n$ so that $m_n$ depend on the parameters $a_n$. This rule yields
a non-decreasing sequence $m_n$. Since $m_n$ is a non-decreasing
sequence, we may start to compute $T_{a_n,m_n}^{-1}K_{m_n}^*f_\dl$
using a small size linear algebraic system \be\label{ag}
T_{a_n,m_n}g^\dl=K_{m_n}^*f_\dl,\ee  and increase the value of $m_n$
only if $G_{n,m_n}>C\dl^\varepsilon$, $C>2$, $\varepsilon\in(0,1)$,
where $G_{n,m_n}$ is defined below, in \eqref{Gnm}. Parameters $m_n$
may take large values for $n\leq n_\dl$, where $n_\dl$ is defined
below, in \eqref{srule}. The choice of the parameters $m_i$,
$i=1,2,\hdots$, in \eqref{it3}, which guarantees convergence of the
iterative process \eqref{it3}, is given in Section 2. We prove in
Section 3 that the discrepancy-type principle, proposed in
\cite{SWIAGR09}, with $T^{(m)}$ and $K_m^*$ in place of $T$ and
$K^*$ respectively, guarantees the convergence of the approximate
solution $u_{n,m_n}^\dl$ to the minimal norm solution of equation
\eqref{11}. Throughout this paper we assume that \be y\perp
\mathcal{N}(K),\ee and
\be Ky=f, \ee where $\mathcal{N}(K)$ is the nullspace of $K$.\\
Throughout this paper we denote by $K_m^*$ the operator approximating
$K^*$, and define
\be\label{Ta}
T_a:=T+aI,\quad T:=K^*K,
\ee
where $a=const>0$ and $I$ is the identity operator.

The main result of this paper is \thmref{MRT} in Section 3.

\section{Convergence of the iterative scheme}
In this section we derive sufficient conditions on the parameters
$m_i$, $i=1,2,\hdots,$ for the iterative process \eqref{it3} to
converge to the minimal-norm solution $y$. The estimates of the
following Lemma are known (see, e.g., \cite{RAMM499}), so their
proofs are omitted.
\begin{lem}\label{lemad1} One has:
\be\label{eq23}\|T_a^{-1}\|\leq\frac{1}{a}\ee and \be\label{eq24}
\|T_a^{-1}K^*\|\leq \frac{1}{2\sqrt{a}},\ee for any positive
constant $a$.
\end{lem}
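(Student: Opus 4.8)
The plan is to establish both bounds by appealing to the spectral theorem for the self-adjoint, compact, nonnegative operator $T=K^*K$. First I would note that $T\geq 0$, so its spectrum lies in $[0,\|T\|]$, and for $a>0$ the operator $T_a=T+aI$ is invertible with spectrum in $[a,\|T\|+a]$. The estimate \eqref{eq23} is then immediate: by the spectral mapping / functional calculus, $\|T_a^{-1}\|=\sup_{\lambda\in\sigma(T)}(\lambda+a)^{-1}\leq a^{-1}$, since every $\lambda\geq 0$.

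For \eqref{eq24} the key observation is the operator identity relating $T_a^{-1}K^*$ to a bounded function of $T$. I would use $\|T_a^{-1}K^*\|^2=\|T_a^{-1}K^*K T_a^{-1}\|=\|T_a^{-1}T T_a^{-1}\|$, using $(T_a^{-1})^*=T_a^{-1}$ and the $C^*$-identity $\|A\|^2=\|A^*A\|$ with $A=T_a^{-1}K^*$. Hence it suffices to bound the self-adjoint operator $T T_a^{-2}=T(T+aI)^{-2}$, which by the functional calculus has norm $\sup_{\lambda\geq 0}\lambda/(\lambda+a)^2$. An elementary one-variable calculus computation shows this supremum equals $1/(4a)$, attained at $\lambda=a$. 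Therefore $\|T_a^{-1}K^*\|^2\leq 1/(4a)$, i.e. $\|T_a^{-1}K^*\|\leq 1/(2\sqrt a)$.

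The only mild subtlety — and the one place to be careful — is the passage from the abstract bound $\sup_{\lambda\in\sigma(T)}h(\lambda)$ to $\sup_{\lambda\geq 0}h(\lambda)$ for $h(\lambda)=\lambda/(\lambda+a)^2$; since $\sigma(T)\subset[0,\infty)$ this is just enlarging the set over which we take the supremum, so it is harmless and in fact we only need the inequality, not equality. One could equally avoid the functional calculus and argue directly: write the AM–GM inequality $(\lambda+a)^2\geq 4a\lambda$ for $\lambda,a\geq 0$, which gives $\lambda(\lambda+a)^{-2}\leq (4a)^{-1}$ pointwise on the spectrum, hence the operator inequality $0\leq T T_a^{-2}\leq (4a)^{-1}I$, and then take norms. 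Since the excerpt explicitly says these estimates are standard and omits the proof, I would keep the argument to these few lines rather than belabor the spectral-theoretic machinery.
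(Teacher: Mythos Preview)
Your argument is correct. The paper explicitly omits the proof of this lemma, citing it as standard, so there is no line-by-line comparison to make. For context, though, the paper does prove the analogous estimates for $Q:=KK^*$ in \lemref{lemq1}, and there the second bound is obtained via the polar decomposition $K=UT^{1/2}$: one writes $\|KT_a^{-1}\|=\|UT^{1/2}T_a^{-1}\|\le\|T^{1/2}T_a^{-1}\|=\sup_{s\ge 0}\dfrac{s^{1/2}}{s+a}\le\dfrac{1}{2\sqrt{a}}$, and the same device would give \eqref{eq24} directly since $T_a^{-1}K^*=(KT_a^{-1})^*$. Your route through the $C^*$-identity $\|A\|^2=\|AA^*\|$ and the scalar bound $\lambda(\lambda+a)^{-2}\le(4a)^{-1}$ is an equally clean alternative that avoids introducing the partial isometry $U$; both arguments reduce to the same one-variable optimization (or, as you note, to AM--GM).
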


 While $T_a$ is boundedly invertible for every $a>0$, $T_{a,m}$ may be
not invertible. The following
lemma provides sufficient conditions for $T_{a,m}$ to be boundedly invertible.
\begin{lem}\label{lemad2} Suppose that
\be\label{c3}\|T-T^{(m)}\|<\epsilon a, \quad a=const>0,\ee where
$\epsilon\in(0,1/2]$. Then the following
estimates hold
\be\label{ei}\|T_{a,m}^{-1}\|\leq
\frac{2}{a},\ee
\be\label{eii}
\|T_{a,m}^{-1}K^*\|\leq \frac{1}{\sqrt{a}} \ee and
\be\label{eiii}
\|T_{a,m}^{-1}K^*K\|\leq 2. \ee
\end{lem}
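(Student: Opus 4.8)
The plan is to use a perturbation argument based on the identity $T_{a,m} = T_a - (T - T^{(m)})$ together with the resolvent estimates of Lemma \ref{lemad1}. First I would write $T_{a,m} = T_a\bigl(I - T_a^{-1}(T-T^{(m)})\bigr)$ and observe that, by \eqref{eq23} and the hypothesis \eqref{c3},
\bee
\|T_a^{-1}(T-T^{(m)})\| \le \frac{1}{a}\cdot \epsilon a = \epsilon \le \frac12 < 1,
\eee
so the Neumann series for $\bigl(I - T_a^{-1}(T-T^{(m)})\bigr)^{-1}$ converges and its norm is bounded by $1/(1-\epsilon) \le 2$. Hence $T_{a,m}$ is boundedly invertible and $T_{a,m}^{-1} = \bigl(I - T_a^{-1}(T-T^{(m)})\bigr)^{-1}T_a^{-1}$, which gives $\|T_{a,m}^{-1}\| \le \frac{1}{1-\epsilon}\cdot\frac1a \le \frac{2}{a}$, establishing \eqref{ei}.

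For \eqref{eii} I would not simply multiply $\|T_{a,m}^{-1}\|$ by $\|K^*\|$, since that loses the crucial $\sqrt a$ behaviour; instead I would route the estimate through $T_a^{-1}K^*$, for which Lemma \ref{lemad1} gives the sharp bound $\frac{1}{2\sqrt a}$. Writing $T_{a,m}^{-1}K^* = \bigl(I - T_a^{-1}(T-T^{(m)})\bigr)^{-1}T_a^{-1}K^*$ and combining the bound $2$ on the Neumann factor with \eqref{eq24} yields $\|T_{a,m}^{-1}K^*\| \le 2\cdot\frac{1}{2\sqrt a} = \frac{1}{\sqrt a}$, which is \eqref{eii}. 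For \eqref{eiii} the same device applies once more: $T_{a,m}^{-1}K^*K = T_{a,m}^{-1}T = \bigl(I - T_a^{-1}(T-T^{(m)})\bigr)^{-1}T_a^{-1}T$, and since $T_a^{-1}T = I - aT_a^{-1}$ has norm at most $1$ (as $T\ge 0$, the spectrum of $T_a^{-1}T$ lies in $[0,1)$), the Neumann factor of norm $2$ gives $\|T_{a,m}^{-1}K^*K\| \le 2$, proving \eqref{eiii}.

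The only genuinely delicate point is making sure each of the three bounds is obtained by inserting the Neumann factor \emph{before} an already-sharp resolvent estimate from Lemma \ref{lemad1}, rather than bounding a product of two separately estimated factors; doing it the naive way would produce, e.g., $\|T_{a,m}^{-1}\|\|K^*\|$, which has the wrong power of $a$ and is useless for the convergence analysis that follows. A secondary point worth stating explicitly is that $T = K^*K \ge 0$, so that $T_a^{-1}T = I - aT_a^{-1}$ is a self-adjoint operator with spectrum in $[0,1)$ and hence norm $\le 1$; this is what powers the argument for \eqref{eiii}. Everything else is a routine application of the submultiplicativity of the operator norm and the bound $\frac{1}{1-\epsilon}\le 2$ coming from $\epsilon \le \tfrac12$.
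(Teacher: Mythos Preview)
Your proof is correct and follows essentially the same route as the paper: factor $T_{a,m}=T_a\bigl(I+T_a^{-1}(T^{(m)}-T)\bigr)$, bound the Neumann factor by $1/(1-\epsilon)\le 2$, and then feed in the sharp estimates $\|T_a^{-1}\|\le 1/a$, $\|T_a^{-1}K^*\|\le 1/(2\sqrt a)$, and $\|T_a^{-1}T\|\le 1$ for the three inequalities respectively. Your explicit justification that $T_a^{-1}T$ has spectrum in $[0,1)$ is a welcome clarification the paper leaves implicit.
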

\begin{proof} Write \be\label{dsT}
T_{a,m}=T_a\left[I+T_a^{-1}(T^{(m)}-T) \right]. \ee It follows from
\eqref{c3} and \eqref{eq23} that \be\label{ds3}
\|T_a^{-1}(T^{(m)}-T)\|\leq \|T_a^{-1}\|\|(T^{(m)}-T)\|\leq
\epsilon<1. \ee
Therefore the operator $I+T_a^{-1}(T^{(m)}-T)$ is boundedly
invertible. Since $T_a$ is invertible, it follows from \eqref{dsT} and \eqref{ds3} that $T_{a,m}$ is invertible and
\be\label{ds4}
T_{a,m}^{-1}=\left[I+T_a^{-1}(T^{(m)}-T)\right]^{-1}T_a^{-1}. \ee
Let us estimate the norm $\|T_{a,m}^{-1}\|$. We have $0<\epsilon
\leq 1/2$, so  \be\label{esds3}
\left\|\left[I+T_a^{-1}(T^{(m)}-T)\right]^{-1}\right\|\leq
\frac{1}{1-\|T_a^{-1}(T^{(m)}-T)\|}\leq \frac{1}{1-\epsilon}\leq 2.
\ee This, together with \eqref{eq23} and \eqref{ds4}, yields \be
\|T_{a,m}^{-1}\|\leq \frac{2}{a}. \ee Thus, estimate \eqref{ei} is
proved. To prove estimate \eqref{eii}, write
$$T_{a,m}^{-1}K^*=\left[I+T_a^{-1}(T^{(m)}-T)\right]^{-1}T_a^{-1}K^*.$$
Using estimates \eqref{esds3} and \eqref{eq24}, one gets
$$\|T_{a,m}^{-1}K^*\|\leq \frac{1}{\sqrt{a}}$$ which
proves estimate \eqref{eii}. Let us derive estimate \eqref{eiii}. One has:
$$T_{a,m}^{-1}K^*K=\left[I+T_a^{-1}(T^{(m)}-T)\right]^{-1}T_a^{-1}K^*K.$$
Using the estimates $\|T_a^{-1}T\|\leq 1$ and \eqref{esds3}, one
obtains $$\|T_{a,m}^{-1}T\|\leq \frac{1}{1-\epsilon}\leq 2.$$ \lemref{lemad2} is proved.
\end{proof}

\begin{lem}\label{lemq} Let $g(x)$ be a continuous function on
$(0,\infty)$, $c>0$ and $q\in(0,1)$ be constants. If \be\label{g}
\lim_{x\to 0^+}g(x)=g(0):=g_0,\ee then \be\label{rel2} \lim_{n\to
\infty}\sum_{j=0}^{n-1}\left(q^{n-j-1}-q^{n-j}\right)g(cq^{j+1})=
g_0. \ee
\end{lem}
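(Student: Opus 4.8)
The plan is to read the left-hand side of \eqref{rel2} as a regular (Toeplitz/Silverman--Toeplitz-type) weighted average of the numbers $g(cq^{j+1})$: set $w_{n,j}:=q^{n-j-1}-q^{n-j}=q^{n-j-1}(1-q)$. The weights are nonnegative, and the point is that most of the mass sits on the indices $j$ close to $n-1$, i.e. precisely where the argument $cq^{j+1}$ is small and hence $g(cq^{j+1})$ is close to $g_0$ by the hypothesis \eqref{g}.

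First I would record two elementary facts. (i) Reindexing by $k=n-1-j$ gives $\sum_{j=0}^{n-1}w_{n,j}=(1-q)\sum_{k=0}^{n-1}q^k=1-q^n$, and more precisely, for any fixed integer $M\ge 0$ and $n>M$,
\[
\sum_{j=0}^{n-M-1}w_{n,j}=(1-q)\sum_{k=M}^{n-1}q^k=q^M-q^n\le q^M .
\]
(ii) Since $g$ is continuous on $(0,\infty)$ and, by \eqref{g}, has a finite limit $g_0$ as $x\to 0^+$, it extends to a continuous function on the compact interval $[0,cq]$ and is therefore bounded there; put $B:=\sup_{0<x\le cq}|g(x)-g_0|<\infty$. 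Note that every argument occurring in the sum, $cq^{j+1}$ with $j\ge 0$, lies in $(0,cq]$.

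Next, using $\sum_{j=0}^{n-1}w_{n,j}=1-q^n$, I would write
\[
\sum_{j=0}^{n-1}w_{n,j}\,g(cq^{j+1})-g_0=\sum_{j=0}^{n-1}w_{n,j}\bigl(g(cq^{j+1})-g_0\bigr)-g_0q^n ,
\]
and estimate the remaining sum by splitting it at $j=n-M$. Given $\varepsilon>0$, choose $\delta>0$ with $|g(x)-g_0|<\varepsilon$ for $0<x<\delta$; then choose $M$ with $q^M<\varepsilon$; then choose $N>M$ so large that $cq^{n-M}<\delta$ for all $n\ge N$. For such $n$ and for $n-M\le j\le n-1$ one has $cq^{j+1}\le cq^{n-M}<\delta$, so that block contributes at most $\varepsilon\sum_{j=n-M}^{n-1}w_{n,j}\le\varepsilon$; by fact (i), the block $0\le j\le n-M-1$ contributes at most $B\sum_{j=0}^{n-M-1}w_{n,j}\le Bq^M<B\varepsilon$. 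Since also $|g_0|q^n\to 0$, we get $\limsup_{n\to\infty}\bigl|\sum_{j=0}^{n-1}w_{n,j}g(cq^{j+1})-g_0\bigr|\le (B+1)\varepsilon$; letting $\varepsilon\downarrow 0$ proves \eqref{rel2}.

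The only mildly delicate point is the boundedness of $g-g_0$ on the whole range $(0,cq]$ of arguments (fact (ii)): this is where \eqref{g} is used in an essential way beyond furnishing the $<\varepsilon$ estimate on a one-sided neighbourhood of $0$, because without a bound on the ``far'' block (small $j$, where $cq^{j+1}$ need not be small) the argument would break down even though that block has small total weight. Everything else is routine bookkeeping with the geometric weights $w_{n,j}$.
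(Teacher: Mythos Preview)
Your proof is correct and follows the same Toeplitz-type averaging strategy as the paper: split the weighted sum into a ``far'' block (small $j$) and a ``near'' block (large $j$), control the near block by the hypothesis $g(x)\to g_0$, and show the far block is negligible.

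The one noteworthy difference is where you place the split. You cut at the moving index $j=n-M$, so the far block $0\le j\le n-M-1$ has a growing number of terms; to handle it you need the uniform bound $B=\sup_{0<x\le cq}|g(x)-g_0|$, which you correctly extract from continuity plus the limit at $0$. The paper instead cuts at a \emph{fixed} index $l$: its far block $\sum_{j=1}^{l-1}w_j^{(n)}g(cq^j)$ has only finitely many terms, each with weight $w_j^{(n)}=q^{n-j}(1-q)\to 0$ as $n\to\infty$, so no global bound on $g$ is needed at all---only finiteness of the $l-1$ numbers $g(cq^1),\dots,g(cq^{l-1})$. This makes the paper's argument marginally more robust (it would go through even if $g$ were merely defined, not continuous, on $(0,\infty)$), while your version makes the geometric weight structure more transparent via the clean tail estimate $\sum_{j\le n-M-1}w_{n,j}\le q^M$. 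Both routes are standard and equally valid here.
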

\begin{proof}
Let \be\label{wi}w_j^{(n)}:=q^{n-j}-q^{n+1-j},\quad w_j^{(n)}>0,\ee
and \be\label{Fn}F_l(n):=\sum_{j=1}^{l-1}w_j^{(n)}g(cq^{j}).\ee Then
\bee |F_{n+1}(n)-g_0|\leq |F_{l}(n)|+\left|\sum_{j=l}^n
w_j^{(n)}g(cq^{j})-g_0\right|.\eee Take $\epsilon>0$ arbitrary
small. For sufficiently large $l(\epsilon)$ one can choose
$n(\epsilon)$, such that \bee |F_{l(\epsilon)}(n)|\leq
\frac{\epsilon}{2},\ \forall n>n(\epsilon), \eee because
$\lim_{n\to\infty}q^n=0.$ Fix $l=l(\epsilon)$ such that
$|g(cq^j)-g_0|\leq \frac{\epsilon}{2}$ for $j>l(\epsilon)$. This is
possible because of \eqref{g}. One has \bee |F_{l(\epsilon)}(n)|\leq
\frac{\epsilon}{2},\ n>n(\epsilon) \eee and \bee\begin{split}
\left|\sum_{j=l(\epsilon)}^n w_j^{(n)}g(cq^{j})-g_0\right|&\leq
\sum_{j=l(\epsilon)}^{n}
w_j^{(n)}|g(cq^{j})-g_0|+|\sum_{j=l(\epsilon)}^{n}
w_j^{(n)}-1||g_0|\\
&\leq
\frac{\epsilon}{2}\sum_{j=l(\epsilon)}^nw_j^{(n)}+q^{n-l(\epsilon)}|g_0|\\
&\leq \frac{\epsilon}{2}+|g_0|q^{n-l(\epsilon)}\leq
\epsilon,\end{split}\eee if $n$ is sufficiently large. Here we have
used the relation\bee \sum_{j=l}^{n}w_j^{(n)}=1-q^{n+1-l}. \eee
Since $\epsilon>0$ is arbitrarily small, relation \eqref{rel2}
follows.\\\lemref{lemq} is proved.
\end{proof}

\begin{lem}\label{lemex}
Let \be\label{it1}u_n=qu_{n-1}+(1-q)T_{a_n}^{-1}K^*f,\quad u_0=0,\quad a_n:=\alpha_0q^n,\quad q\in(0,1).\ee Then \be\label{UB}
\|u_n-y\|\leq q^n\|y\|+\sum_{j=0}^{n-1}\left(
q^{n-j-1}-q^{n-j}\right)a_{j+1}\|T_{a_{j+1}}^{-1}y\|,\quad \forall
n\geq 1,\ee and \be\label{conu} \|u_n-y\|\to 0 \text{ as } n\to
\infty. \ee
\end{lem}
\begin{proof}
By induction, we obtain \be\label{dun}
u_n=\sum_{j=0}^{n-1}w_{j}^{(n)}T_{a_{j+1}}^{-1}K^*f, \ee where
$w_j^{(n)}=q^{n-j-1}-q^{n-j}$. This, together with the identities
$Ky=f$, \be T_a^{-1}K^*K=T_a^{-1}(K^*K+aI-aI)=I-aT_a^{-1} \ee and
\be\label{swi} \sum_{j=0}^nw_{j}^{(n)}=1-q^n, \ee yield
\bee\begin{split}
u_{n}&=\sum_{j=0}^{n-1}w_{j}^{(n)}T_{a_{j+1}}^{-1}(T_{a_{j+1}}-a_{j+1}I)y\\
&=\sum_{j=0}^{n-1}w_{j}^{(n)}y-\sum_{j=0}^{n-1}w_{j}^{(n)}a_{j+1}T_{a_{j+1}}^{-1}y\\
&=y-q^ny-\sum_{j=0}^{n-1}w_{j}^{(n)}a_{j+1}T_{a_{j+1}}^{-1}y.
\end{split}\eee Thus, estimate \eqref{UB} follows.
To prove \eqref{conu}, we apply \lemref{lemq} with
$g(a):=a\|T_a^{-1}y\|.$ Since $y\perp \mathcal{N}(K)$, it follows
from the spectral theorem that $$\lim_{a\to 0}g^2(a)=\lim_{a\to 0}
\int_0^\infty\frac{a^2}{(a+s)^2}d\langle
E_sy,y\rangle=\|P_{\mathcal{N}(K)}y\|^2=0,$$ where $E_s$ is the
resolution of the identity corresponding to $K^*K$, and $P$ is the
orthogonal projector onto $\mathcal{N}(K)$. Thus, by \lemref{lemq},
\eqref{conu} follows.\\
\lemref{lemex} is proved.
\end{proof}

\begin{lem}\label{lem21}
Let $u_n$ and $a_n=\alpha_0q^n$, $\alpha_0>0$, $q\in(0,1)$ be
defined in \eqref{it1}, $T_{a,m}$ be defined in \eqref{Tm}, $m_i$ be
chosen so that \be \|T-T^{(m_i)}\|\leq \frac{a_i}{2},\quad 1\leq
i\leq n, \ee and \be\label{it2}
u_{n,m_n}=qu_{n-1,m_{n-1}}+(1-q)T_{a_n,m_n}^{-1}K_{m_n}^*f,\quad
u_{0,m_0}=0. \ee Then \be\begin{split}\label{mes1}
\|u_{n,m_n}-u_n\|&\leq q^n\|y\|+
\|y-u_n\|+2\sum_{j=0}^{n-1}w_{j+1}^{(n)}\frac{\|(K_{m_{j+1}}^*K-
T^{(m_{j+1})})y\|}{a_{j+1}}\\
&+2\sum_{j=0}^{n-1}w_{j+1}^{(n)}a_{j+1}\|T_{a_{j+1}}^{-1}y\|,
\end{split}\ee where $w_j^{(n)}$ are defined in \eqref{wi}.\end{lem}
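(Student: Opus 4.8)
The plan is to derive an explicit representation for the difference $u_{n,m_n}-u_n$ by induction on $n$, in parallel with the representation \eqref{dun} already obtained for $u_n$, and then to estimate each resulting term using Lemma~\ref{lemad2}. First I would unwind both recursions: from \eqref{it2} one gets $u_{n,m_n}=\sum_{j=0}^{n-1}w_j^{(n)}T_{a_{j+1},m_{j+1}}^{-1}K_{m_{j+1}}^*f$, while \eqref{dun} gives $u_n=\sum_{j=0}^{n-1}w_j^{(n)}T_{a_{j+1}}^{-1}K^*f$ (the index shift between $w_j^{(n)}$ in \eqref{dun} and $w_{j+1}^{(n)}$ in the statement is just a relabeling, since $w_j^{(n)}=q^{n-j-1}-q^{n-j}$ and $w_{j+1}^{(n)}=q^{n-j-2}-q^{n-j-1}$ — I would fix conventions so the exponents match the claimed bound). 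Subtracting term by term, the quantity to control is $\sum_j w^{(n)}_{\bullet}\bigl(T_{a_{j+1},m_{j+1}}^{-1}K_{m_{j+1}}^*-T_{a_{j+1}}^{-1}K^*\bigr)f$.

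The core of the argument is a resolvent-type decomposition of each summand. I would write
\[
T_{a,m}^{-1}K_m^*f-T_a^{-1}K^*f
=T_{a,m}^{-1}\bigl(K_m^*f-K^*f\bigr)+\bigl(T_{a,m}^{-1}-T_a^{-1}\bigr)K^*f,
\]
and for the second piece use the identity $T_{a,m}^{-1}-T_a^{-1}=T_{a,m}^{-1}(T-T^{(m)})T_a^{-1}$. Then, using $Ky=f$ so that $K^*f=Ty$, the first term becomes $T_{a,m}^{-1}(K_m^*K-K^*K)y=T_{a,m}^{-1}(K_{m_{j+1}}^*K-T^{(m_{j+1})})y+T_{a,m}^{-1}(T^{(m)}-T)y$ after inserting $T^{(m)}$, and applying \eqref{ei} ($\|T_{a,m}^{-1}\|\le 2/a$) produces the term $2w^{(n)}_{j+1}\|(K^*_{m_{j+1}}K-T^{(m_{j+1})})y\|/a_{j+1}$ appearing in \eqref{mes1}. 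The leftover pieces involving $(T^{(m)}-T)y$ and $(T_{a,m}^{-1}-T_a^{-1})Ty$ I would combine: $T_a^{-1}Ty = y-a T_a^{-1}y$, and after summing against the weights $\sum w^{(n)} = 1-q^n$ one recovers the terms $q^n\|y\|$, $\|y-u_n\|$, and $2\sum w^{(n)}_{j+1}a_{j+1}\|T_{a_{j+1}}^{-1}y\|$. The appearance of $\|y-u_n\|$ (rather than a fully explicit quantity) suggests that the cleanest route is actually to bound $\|u_{n,m_n}-u_n\|$ by first comparing $u_{n,m_n}$ to $y$ and then invoking $\|y-u_n\|$ via the triangle inequality, so that \eqref{UB} of Lemma~\ref{lemex} does the bookkeeping for the "exact" recursion; I would then only need to estimate $\|u_{n,m_n}-y\|$ directly from \eqref{it2}.

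I expect the main obstacle to be handling the telescoping/weighted sums cleanly. Because the $m_j$ vary with $j$, one cannot factor a single resolvent out of the sum, so each summand must be estimated individually and the factors $2/a_{j+1}$ (from $\|T_{a_{j+1},m_{j+1}}^{-1}\|\le 2/a_{j+1}$, valid precisely because of the hypothesis $\|T-T^{(m_i)}\|\le a_i/2$, which is the $\epsilon=1/2$ case of \eqref{c3}) must be tracked through the weights $w_{j+1}^{(n)}$; getting the constants to come out exactly as $2$ rather than some larger multiple requires care in choosing how to split $K_m^*K - T^{(m)}$ versus $T^{(m)}-T$. A secondary nuisance is the index bookkeeping between the $w_j^{(n)}$ of \eqref{dun}/\eqref{swi} and the $w_{j+1}^{(n)}$ in the statement, and making sure the $q^n\|y\|$ term is not double-counted once one also uses \eqref{UB}. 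Once the per-term resolvent identity is in place and one is disciplined about inserting $\pm T^{(m_{j+1})}$ and $\pm y$, the rest is routine application of \eqref{ei}–\eqref{eiii} and \eqref{eq24}.
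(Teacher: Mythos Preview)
Your proposal is correct and essentially matches the paper's proof: the paper also unwinds the recursion to get $u_{n,m_n}=\sum_{j=0}^{n-1}w_{j+1}^{(n)}T_{a_{j+1},m_{j+1}}^{-1}K_{m_{j+1}}^*f$ (proved as a separate sublemma), then inserts $\pm T^{(m_{j+1})}$, uses $T_{a,m}^{-1}T^{(m)}y=y-aT_{a,m}^{-1}y$ together with the resolvent identity $T_{a,m}^{-1}-T_a^{-1}=T_{a,m}^{-1}(T-T^{(m)})T_a^{-1}$, and finally applies \eqref{ei} with $\epsilon=1/2$. In particular, the ``cleanest route'' you identify---extract $y-q^ny$ from the $u_{n,m_n}$ sum and leave $-u_n$ intact so that $(y-u_n)$ appears directly---is precisely the decomposition $I_1+I_2$ the paper uses, and your remarks about the $w_j^{(n)}$ versus $w_{j+1}^{(n)}$ index shift are on point (the paper is indeed slightly inconsistent there).
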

\begin{proof}
One has $ w_i^{(n)}>0$,  $0<q<1$, and
$$\sum_{j=0}^{n-1}w_{j+1}^{(n)}=1-q^n\to 1,\quad  as
\quad  n\to \infty.$$
Therefore one may use $w_{j+1}^{(n)}$ for large $n$ as quadrature
weights. To prove inequality \eqref{mes1}, the following lemma is
needed:
\begin{lem}\label{slem21}
Let $u_{n,m_n}$ be defined in \eqref{it2}. Then \be\label{sunm}
u_{n,m_n}=\sum_{j=0}^{n-1}w_{j+1}^{(n)}T_{a_{j+1},m_{j+1}}^{-1}K^*_{m_{j+1}}f,\quad
n>0, \ee where $w_j$ are defined in \eqref{wi}.
\end{lem}
\begin{proof}
Let us prove equation \eqref{sunm} by induction. For $n=1$ we get
\bee\begin{split}u_{1,m_1}&=qu_0+(1-q)T_{a_1,m_1}^{-1}K_{m_1}^*f=(1-q)T_{a_1,m_1}^{-1}K_{m_1}^*f\\
&=w_{1}^{(1)}T_{a_1,m_1}^{-1}K_{m_1}^*f,\end{split}\eee so equation
\eqref{sunm} holds. Suppose equation \eqref{sunm} holds for $1\leq
n\leq k$. Then \be\begin{split}
u_{k+1,m_{k+1}}&=qu_{k,m_k}+(1-q)T_{a_{k+1},m_{k+1}}^{-1}K_{m_{k+1}}^*f\\
&=q\sum_{j=0}^{k-1}w_{j+1}^{(k)}T_{a_{j+1},m_{j+1}}^{-1}K_{m_{j+1}}^*f+(1-q)T_{a_{k+1},m_{k+1}}^{-1}K_{m_{k+1}}^*f\\
&=\sum_{j=0}^{k-1}w_{j+1}^{(k+1)}T_{a_{j+1},m_{j+1}}^{-1}K_{m_{j+1}}^*f+w_{k+1}^{(k+1)}T_{a_{k+1},m_{k+1}}^{-1}K_{m_{k+1}}^*f\\
&=\sum_{j=0}^{k}w_{j+1}^{(k+1)}T_{a_{j+1},m_{j+1}}^{-1}K_{m_{j+1}}^*f.\end{split}
\ee Here we have used the identities $qw_j^{(n)}=w_j^{(n+1)}$ and
$1-q=w_j^{(j)}.$ Equation \eqref{sunm} is proved.
\end{proof}

By \lemref{slem21},
one gets: \bee\begin{split}
u_{n,m_n}-u_n&=\sum_{j=0}^{n-1}w_{j+1}^{(n)}T_{a_{j+1},m_{j+1}}^{-1}K_{m_{j+1}}^*Ky-u_n\\
&=\sum_{j=0}^{n-1}w_{j+1}^{(n)}T_{a_{j+1},m_{j+1}}^{-1}(K_{m_{j+1}}^*K-T^{(m_{j+1})}+T^{(m_{j+1})})y-u_n\\
&:=I_1+I_2,
\end{split}\eee
where
$$I_1:=\sum_{j=0}^{n-1}w_{j+1}^{(n)}T_{a_{j+1},m_{j+1}}^{-1}(K_{m_{j+1}}^*K-T^{(m_{j+1})}+T^{(m_{j+1})})y,$$
and
$$I_2:=-u_n.$$ We get \bee\begin{split}
I_1&=\sum_{j=0}^{n-1}w_{j+1}^{(n)}\left[T_{a_{j+1},m_{j+1}}^{-1}(K_{m_{j+1}}^*K-T^{(m_{j+1})})y+T_{a_{j+1},m_{j+1}}^{-1}T^{(m_{j+1})}y \right]\\
&=\sum_{j=0}^{n-1}w_{j+1}^{(n)}\left[T_{a_{j+1},m_{j+1}}^{-1}(K_{m_{j+1}}^*K-T^{(m_{j+1})})y+y-a_{j+1}T_{a_{j+1},m_{j+1}}^{-1}y \right]\\
&=\sum_{j=0}^{n-1}w_{j+1}^{(n)}\left[T_{a_{j+1},m_{j+1}}^{-1}(K_{m_{j+1}}^*K-T^{(m_{j+1})})y-a_{j+1}T_{a_{j+1},m_{j+1}}^{-1}y \right]\\
&+y-q^ny\\
&=\sum_{j=0}^{n-1}w_{j+1}^{(n)}T_{a_{j+1},m_{j+1}}^{-1}(K_{m_{j+1}}^*K-T^{(m_{j+1})})y\\
&-\sum_{j=0}^{n-1}w_{j+1}^{(n)}a_{j+1}(T_{a_{j+1},m_{j+1}}^{-1}-T_{a_{j+1}}^{-1}+T_{a_{j+1}}^{-1})y+y-q^ny\\
&=y-q^ny+\sum_{j=0}^{n-1}w_{j+1}^{(n)}T_{a_{j+1},m_{j+1}}^{-1}(K_{m_{j+1}}^*K-T^{(m_{j+1})})y\\
&-\sum_{j=0}^{n-1}w_{j+1}^{(n)}\left[a_{j+1}T_{a_{j+1},m_{j+1}}^{-1}(T-T^{(m_{j+1})})T_{a_{j+1}}^{-1}y
+a_{j+1}T_{a_{j+1}}^{-1}y\right].
\end{split}\eee Therefore,
\be\begin{split}
I_1+I_2&=y-u_n-q^ny+\sum_{j=0}^{n-1}w_{j+1}^{(n)}T_{a_{j+1},m_{j+1}}^{-1}(K_{m_{j+1}}^*K-T^{(m_{j+1})})y\\
&-\sum_{j=0}^{n-1}w_{j+1}^{(n)}\left[a_{j+1}T_{a_{j+1}}^{-1}+a_{j+1}T_{a_{j+1},m_{j+1}}^{-1}(T-T^{(m_{j+1})})T_{a_{j+1}}^{-1}\right]y.
\end{split}\ee Applying the estimates $\|T^{(m_i)}-T\|\leq \frac{a_i}{2}$ and
$\|T_{a_{i},m_{i}}^{-1}\|\leq \frac{2}{a_i}$ in \eqref{es1a}, one
gets
\be\begin{split}\label{es1a} \|u_{n,m}-u_n\|&\leq q^n\|y\|+
\|y-u_n\|+\sum_{j=0}^{n-1}w_{j+1}^{(n)}a_{j+1}\|T_{a_{j+1}}^{-1}y\|\\
&+\sum_{j=0}^{n-1}w_{j+1}^{(n)}\|T_{a_{j+1},m_{j+1}}^{-1}(K_{m_{j+1}}^*K-T^{(m_{j+1})})y\|\\
&+\sum_{j=0}^{n-1}w_{j+1}^{(n)}\|a_{j+1}T_{a_{j+1},m_{j+1}}^{-1}(T-T^{(m_{j+1})})T_{a_{j+1}}^{-1}y\|\\
&\leq q^n\|y\|+
\|y-u_n\|+\sum_{j=0}^{n-1}w_{j+1}^{(n)}a_{j+1}\|T_{a_{j+1}}^{-1}y\|\\
&+\sum_{j=0}^{n-1}w_{j+1}^{(n)}\|T_{a_{j+1},m_{j+1}}^{-1}\|\|(K_{m_{j+1}}^*K-T^{(m_{j+1})})y\|\\
&+\sum_{j=0}^{n-1}w_{j+1}^{(n)}a_{j+1}\|T_{a_{j+1},m_{j+1}}^{-1}\|\|T-T^{(m_{j+1})}\|\|T_{a_{j+1}}^{-1}y\|\\
&\leq q^n\|y\|+
\|y-u_n\|+\sum_{j=0}^{n-1}w_{j+1}^{(n)}a_{j+1}\|T_{a_{j+1}}^{-1}y\|\\
&+\sum_{j=0}^{n-1}w_{j+1}^{(n)}\frac{2}{a_{j+1}}\|(K_{m_{j+1}}^*K-T^{(m_{j+1})})y\|\\
&+\sum_{j=0}^{n-1}w_{j+1}^{(n)}a_{j+1}\|T_{a_{j+1}}^{-1}y\|.
\end{split}\ee\lemref{lem21} is proved.
\end{proof}
\begin{lem}\label{lem22}
Under the assumptions of \lemref{lem21} if \be \|K_{m_n}^*-K^*\|\leq
\frac{\sqrt{a_n}}{2} \ee then \be\label{mes2}
\|u_{n,m_n}-u_{n,m_n}^\dl\|\leq
\frac{\sqrt{q}}{1-q^{3/2}}\frac{2\dl}{\sqrt{q}\sqrt{a_n}}. \ee
\end{lem}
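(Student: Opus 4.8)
\medskip
\noindent The plan is to exploit the linearity of the scheme \eqref{it3} in the data. Subtracting the recursion that defines $u_{n,m_n}^\dl$ (which has $f_\dl$ on the right-hand side) from the one defining $u_{n,m_n}$ (which has $f$) gives
\[
u_{n,m_n}-u_{n,m_n}^\dl=q\bigl(u_{n-1,m_{n-1}}-u_{n-1,m_{n-1}}^\dl\bigr)+(1-q)T_{a_n,m_n}^{-1}K_{m_n}^*(f-f_\dl),
\]
and, by \lemref{slem21} and its obvious analogue for $u_{n,m_n}^\dl$, one also has the closed form $u_{n,m_n}-u_{n,m_n}^\dl=\sum_{j=0}^{n-1}w_{j+1}^{(n)}T_{a_{j+1},m_{j+1}}^{-1}K_{m_{j+1}}^*(f-f_\dl)$. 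Thus the whole problem reduces to bounding the operator norm $\|T_{a_k,m_k}^{-1}K_{m_k}^*\|$ for $k\le n$ and then summing with $\|f-f_\dl\|\le\dl$.

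The key step is the operator-norm estimate. \lemref{lemad2} controls $\|T_{a,m}^{-1}K^*\|$ but not $\|T_{a,m}^{-1}K_m^*\|$, so I would split $K_{m_k}^*=K^*+(K_{m_k}^*-K^*)$ and combine the triangle inequality with estimates \eqref{ei}, \eqref{eii} and the hypothesis $\|K_{m_k}^*-K^*\|\le\sqrt{a_k}/2$ (read, as in \lemref{lem21}, for every index $k\le n$); this yields
\[
\|T_{a_k,m_k}^{-1}K_{m_k}^*\|\le\|T_{a_k,m_k}^{-1}K^*\|+\|T_{a_k,m_k}^{-1}\|\,\|K_{m_k}^*-K^*\|\le\frac{1}{\sqrt{a_k}}+\frac{2}{a_k}\cdot\frac{\sqrt{a_k}}{2}=\frac{2}{\sqrt{a_k}}.
\]
Together with $\|f-f_\dl\|\le\dl$ this turns the displayed identity into the scalar recursion $e_n\le qe_{n-1}+2(1-q)\dl/\sqrt{a_n}$, where $e_n:=\|u_{n,m_n}-u_{n,m_n}^\dl\|$ and $e_0=0$ since $u_{0,m_0}=u_{0,m_0}^\dl=0$.

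It then remains to solve this recursion. Iterating gives $e_n\le 2(1-q)\dl\sum_{j=0}^{n-1}q^j a_{n-j}^{-1/2}$; since $a_k=\alpha_0q^k$ the summand equals $q^{-n/2}q^{3j/2}/\sqrt{\alpha_0}$, so the sum is geometric with ratio $q^{3/2}$ and is bounded by $q^{-n/2}/\bigl(\sqrt{\alpha_0}\,(1-q^{3/2})\bigr)=1/\bigl((1-q^{3/2})\sqrt{a_n}\bigr)$. Hence $e_n\le 2(1-q)\dl/\bigl((1-q^{3/2})\sqrt{a_n}\bigr)\le\frac{\sqrt{q}}{1-q^{3/2}}\cdot\frac{2\dl}{\sqrt{q}\sqrt{a_n}}$, as claimed. (Alternatively, one can argue by induction on $n$ directly from the recursion: using $q/\sqrt{a_{n-1}}=q^{3/2}/\sqrt{a_n}$, the inductive step collapses to the trivial inequality $1-q\le1$.) I do not expect a genuine obstacle; the only point that needs care is replacing $K^*$ by its finite-dimensional approximation $K_{m_k}^*$ in the norm estimate — precisely where the extra hypothesis on $\|K_{m_k}^*-K^*\|$ enters — together with the routine bookkeeping of the geometric sum in the geometrically decaying parameters $a_n$.
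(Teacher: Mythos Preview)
Your argument is correct and coincides with the paper's own proof: the same splitting $K_{m_k}^*=K^*+(K_{m_k}^*-K^*)$ together with \eqref{ei}, \eqref{eii} yields the bound $\|T_{a_k,m_k}^{-1}K_{m_k}^*\|\le 2/\sqrt{a_k}$ and hence the scalar recursion $e_n\le qe_{n-1}+2\dl/\sqrt{a_n}$ (the paper drops the factor $(1-q)$ at this stage rather than at the end). The only cosmetic difference is that the paper closes the recursion by the induction you mention parenthetically, whereas you sum the geometric series directly; your reading that the hypothesis on $\|K_{m_k}^*-K^*\|$ must hold for all $k\le n$ is also what the paper implicitly uses.
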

\begin{proof}
We have \be\begin{split}
&u_{n,m_n}-u_{n,m_n}^\dl=q(u_{n-1,m_{n-1}}-u_{n-1,m_{n-1}}^\dl)
+(1-q)T_{a_n,m_n}^{-1}K_{m_n}^*(f-f_\dl)\\
&=q(u_{n-1,m_{n-1}}-u_{n-1,m_{n-1}}^\dl)
+(1-q)T_{a_n,m_n}^{-1}(K_{m_n}^*-K^*)(f-f_\dl)\\
&+(1-q)T_{a_n,m_n}^{-1}K^*(f-f_\dl).\end{split}\ee Since
$\|f-f_\dl\|\leq \dl$, $\|T_{a_n,m_n}^{-1}K^*\|\leq
\frac{1}{\sqrt{a_n}}$ and $\|K_{m_n}^*-K^*\|\leq
\frac{\sqrt{a_n}}{2}$, it follows that \be
\|u_{n,m_n}-u_{n,m_n}^\dl\|\leq
q\|u_{n-1,m_{n-1}}-u_{n-1,m_{n-1}}^\dl\|+2\frac{\dl}{\sqrt{a_n}}.
\ee Let us prove estimate \eqref{mes2} by induction. Define
$H_n:=\|u_{n,m_n}-u_{n,m_n}^\dl\|$ and
$h_n:=2\frac{\dl}{\sqrt{q}\sqrt{a_n}}.$ For $n=0$ we get $H_0=0<
\frac{\sqrt{q}}{1-q^{3/2}}h_0.$ Thus \eqref{mes2} holds. Suppose
estimate \eqref{mes2} holds for $0\leq n\leq k$. Then
\be\begin{split} H_{k+1}&\leq qH_k+h_{k}\leq q
\frac{\sqrt{q}}{1-q^{3/2}}h_k+h_{k}=\left(q
\frac{\sqrt{q}}{1-q^{3/2}}+1\right)h_k\\
&=\frac{1}{1-q^{3/2}}\frac{h_k}{h_{k+1}}h_{k+1}\leq
\frac{\sqrt{q}}{1-q^{3/2}}h_{k+1}.
\end{split}\ee Here we have used the relation
\be
\frac{h_k}{h_{k+1}}=\frac{2\frac{\dl}{\sqrt{q}\sqrt{a_k}}}{2\frac{\dl}{\sqrt{q}\sqrt{a_{k+1}}}}=\frac{\sqrt{a_{k+1}}}{\sqrt{a_k}}=\frac{\sqrt{qa_k}}{\sqrt{a_k}}=\sqrt{q}.
\ee \lemref{lem22} is proved.
\end{proof}

The following theorem gives the convergence of the iterative scheme
\eqref{it3}.
\begin{thm}\label{thm21}
Let $u_{n,m_{n}}^\dl$ be defined in \eqref{it3}, $m_i$ be chosen so
that \be\label{mcon1} \|T-T^{(m_i)}\|\leq a_i/2,\ee \be\label{mcon2}
\|T^{(m_i)}-K_{m_i}^*K\|\leq a_i^2,\ee
\be\label{mcon3}\|K_{m_i}^*-K^*\|\leq \sqrt{a_i}/2, \ee and $n_\dl$
satisfies the following relations: \be\label{asdel} \lim_{\dl\to
0}n_\dl=\infty,\quad \lim_{\dl\to 0}\frac{\dl}{\sqrt{a_{n_\dl}}}=0.
\ee Then \be\label{mcuy} \lim_{\delta \to
0}\|u_{n_\dl,m_{n_\dl}}^\dl -y\|= 0. \ee
\end{thm}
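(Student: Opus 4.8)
The plan is to interpolate between $u_{n_\dl,m_{n_\dl}}^\dl$ and $y$ by the intermediate vectors $u_{n_\dl,m_{n_\dl}}$ (the noise-free finite-dimensional iterate \eqref{it2}) and $u_{n_\dl}$ (the noise-free iterate \eqref{it1}), and to use the triangle inequality
\bee
\|u_{n_\dl,m_{n_\dl}}^\dl-y\|\leq \|u_{n_\dl,m_{n_\dl}}^\dl-u_{n_\dl,m_{n_\dl}}\|+\|u_{n_\dl,m_{n_\dl}}-u_{n_\dl}\|+\|u_{n_\dl}-y\|,
\eee
showing that each summand on the right tends to $0$ as $\dl\to0$. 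The first (noise-propagation) term is handled by \lemref{lem22}, whose hypotheses are \eqref{mcon1} and \eqref{mcon3}: it yields the bound $\frac{2\dl}{(1-q^{3/2})\sqrt{a_{n_\dl}}}$, which tends to $0$ by the second relation in \eqref{asdel}. The third (regularization-error) term tends to $0$ by \lemref{lemex} together with the first relation in \eqref{asdel}, which guarantees $n_\dl\to\infty$.

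The bulk of the argument is the middle (discretization-error) term. For this I would apply \lemref{lem21}, whose hypothesis is \eqref{mcon1}, to get
\bee
\|u_{n,m_n}-u_n\|\leq q^n\|y\|+\|y-u_n\|+2\sum_{j=0}^{n-1}w_{j+1}^{(n)}\frac{\|(K_{m_{j+1}}^*K-T^{(m_{j+1})})y\|}{a_{j+1}}+2\sum_{j=0}^{n-1}w_{j+1}^{(n)}a_{j+1}\|T_{a_{j+1}}^{-1}y\|,
\eee
and then check that all four terms vanish as $n\to\infty$, finally specializing $n=n_\dl$. The first term is $q^n\|y\|\to0$; the second is $\|y-u_n\|\to0$ by \lemref{lemex}. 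For the third term the \emph{quadratic} accuracy \eqref{mcon2} is exactly what is needed: it gives $\|(K_{m_{j+1}}^*K-T^{(m_{j+1})})y\|\leq a_{j+1}^2\|y\|$, so after division by $a_{j+1}$ the sum is at most $2\|y\|\sum_{j=0}^{n-1}w_{j+1}^{(n)}a_{j+1}=2\alpha_0(1-q)\|y\|\,nq^n\to0$ by a direct computation (or by \lemref{lemq} with $g(a)=a$). For the fourth term I would apply \lemref{lemq} with $g(a):=a\|T_a^{-1}y\|$ and $c:=\alpha_0$: its limit is $g_0=\lim_{a\to0^+}a\|T_a^{-1}y\|$, which equals $0$ because $y\perp\mathcal{N}(K)$, by the spectral-theorem computation already carried out in the proof of \lemref{lemex}. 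Hence $\|u_{n,m_n}-u_n\|\to0$, and in particular $\|u_{n_\dl,m_{n_\dl}}-u_{n_\dl}\|\to0$ as $\dl\to0$. Combining the three estimates gives \eqref{mcuy}.

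I expect the middle term to be the only place requiring care, and within it two points deserve attention: first, that the linear bound \eqref{mcon1} alone does \emph{not} suffice to kill the third sum --- after dividing by $a_{j+1}$ one needs the extra power of $a_{j+1}$ supplied by \eqref{mcon2}; and second, that the fourth sum is genuinely a regularization-type error, so its vanishing relies on the averaging \lemref{lemq} together with the source condition $y\perp\mathcal{N}(K)$ rather than on any accuracy of the discretization. The remainder is bookkeeping: matching each of \eqref{mcon1}--\eqref{mcon3} and \eqref{asdel} to the lemma that consumes it.
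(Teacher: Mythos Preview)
Your proposal is correct and follows essentially the same route as the paper: the same three-term triangle inequality, \lemref{lem22} for the noise term, \lemref{lemex} for the regularization term, and \lemref{lem21} combined with \eqref{mcon2} and \lemref{lemq} (with $g(a)=a\|T_a^{-1}y\|$ and, for the third sum, effectively $g(a)=a\|y\|$) for the discretization term. The only cosmetic difference is that the paper bundles the non-noise pieces into a single quantity $J(n)$ before passing to the limit, whereas you treat the four summands separately; your direct computation $\sum_{j=0}^{n-1}w_{j+1}^{(n)}a_{j+1}=\alpha_0(1-q)\,nq^n$ is a valid alternative to invoking \lemref{lemq} for that particular sum.
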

\begin{proof}
We have \be \|y-u_{n,m_n}^\dl\|\leq
\|y-u_n\|+\|u_n-u_{n,m_n}\|+\|u_{n,m_n}-u_{n,m_n}^\dl\|. \ee
 From \eqref{mes1} and estimate \eqref{mcon2} we get \be \|u_{n,m_n}-u_n\|\leq
q^n\|y\|+\|y-u_n\|+2\sum_{j=0}^{n-1}w_{j+1}^{(n)}a_{j+1}\|y\|+2\sum_{j=0}^{n-1}w_{j+1}^{(n)}a_{j+1}\|T_{a_{j+1}}^{-1}y\|.
 \ee
This, together with \lemref{lem22}, implies \be\label{et27}
\|y-u_{n,m_n}^\dl\|\leq
2\left(J(n)+\frac{\dl}{(1-q^{3/2})\sqrt{a_n}}\right),
  \ee where \be\label{Jn}J(n):=\frac{q^n}{2}\|y\|+
\|y-u_n\|+\sum_{j=0}^{n-1}w_{j+1}^{(n)}a_{j+1}\|y\|+\sum_{j=0}^{n-1}w_{j+1}^{(n)}a_{j+1}\|T_{a_{j+1}}^{-1}y\|,\ee
and $w_j^{(n)}$ are defined in \eqref{wi}.
 Since $y\perp
\mathcal{N}(A)$, it follows that $$\lim_{a\to 0}
a^2\|T_a^{-1}y\|^2=\int_0^\infty \frac{a^2}{(a+s)^2}d\la
E_sy,y\ra=\|P_{\mathcal{N}(K)}y\|^2=0,$$ where $E_s$ is the
resolution of the identity of the selfadjoint operator $T$, and
$P_{\mathcal{N}(K)}$ is the orthogonal projector onto the nullspace
 ${\mathcal{N}(K)}$. Applying \lemref{lemq} with $g(a):=a\|T_a^{-1}y\|$, one gets
\be\label{gaT} \lim_{n\to
\infty}\sum_{j=0}^{n-1}w_{j+1}^{(n)}a_{j+1}\|T_{a_{j+1}}^{-1}y\|=0.
\ee Similarly, letting $g(a):=a\|y\|$ in \lemref{lemq}, we get
\be\label{gy} \lim_{n\to
\infty}2\sum_{j=0}^{n-1}w_{j+1}^{(n)}a_{j+1}\|y\|=0. \ee Relations
\eqref{gaT} and \eqref{gy}, together with \lemref{lemex}, imply \be
\lim_{n\to \infty} J(n)=0. \ee If we stop the iteration at $n=n_\dl$
such that assumptions \eqref{asdel} hold then $\lim_{\dl\to 0}
J(n_\dl)=0$ and $\lim_{\dl\to 0}\frac{\dl}{\sqrt{a_{n_\dl}}}=0.$
Therefore, relation \eqref{mcuy} is proved.
This proves \thmref{thm21}.
\end{proof}
\section{ A discrepancy-type principle for DSM}
In this section we propose an adaptive stopping rule for the
iterative scheme \eqref{it3}. Throughout this section the parameters
$m_i,\ i=1,2,\hdots, $ are chosen so that conditions
\eqref{mcon1}-\eqref{mcon3} hold, \be\label{cq1} \|Q-Q^{(m_i)}\|\leq
\epsilon a_i,\quad \epsilon\in(0,1/2],\quad a_i=\alpha_0q^i,\quad
\alpha_0=const>0, \ee where \be Q:=KK^*, \ee and $Q^{(m)}$ is a
finite-dimensional approximation of $Q.$  One may satisfy condition
\eqref{cq1} by approximating the kernel $q(x,s)$ of $Q$,
\be\label{oq} q(x,s)=\int_a^bk(x,z)\overline{k(s,z)}dz, \ee with
\be\label{qm}
q_m(x,s)=\sum_{i=1}^m\gamma_ik(x,z_i)\overline{k(s,z_i)}, \ee where
$\gamma_i,\ i=1,2,\hdots, m,$ are some quadrature weights and $z_i$
are the collocation points.
\begin{lem}\label{lemq1}
\be\label{cq2}\|Q_a^{-1}\|\leq\frac{1}{a}\ee and \be\label{cq3}
\|Q_a^{-1}K\|\leq \frac{1}{2\sqrt{a}},\ee for any positive constant
$a$.
\end{lem}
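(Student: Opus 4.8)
The plan is to follow the same route as in \lemref{lemad1}, using that $Q:=KK^*$ is a bounded, self-adjoint, non-negative operator and that $Q_a:=Q+aI$. First I would invoke the spectral theorem for $Q$: writing $\{E_s\}$ for its resolution of the identity (supported on a subset of $[0,\infty)$), one has, for any $x\in L^2[a,b]$,
\bee \|Q_a^{-1}x\|^2=\int_0^\infty\frac{1}{(s+a)^2}d\la E_sx,x\ra\leq \frac{1}{a^2}\|x\|^2, \eee
since $\sup_{s\geq 0}(s+a)^{-1}=a^{-1}$. This gives \eqref{cq2}.

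For \eqref{cq3}, the key observation is that $\|Q_a^{-1}K\|^2=\|Q_a^{-1}K(Q_a^{-1}K)^*\|=\|Q_a^{-1}KK^*Q_a^{-1}\|=\|Q_a^{-1}QQ_a^{-1}\|$, where we used that $Q_a^{-1}$ is self-adjoint and $KK^*=Q$. Then, again by the spectral theorem,
\bee \|Q_a^{-1}QQ_a^{-1}\|=\sup_{s\geq 0}\frac{s}{(s+a)^2}, \eee
and an elementary computation shows this supremum is attained at $s=a$ and equals $\frac{1}{4a}$. Hence $\|Q_a^{-1}K\|\leq \frac{1}{2\sqrt{a}}$, which is \eqref{cq3}.

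There is essentially no serious obstacle: the statement is the exact analogue of \lemref{lemad1} with $K^*K$ replaced by $KK^*$, and both operators are non-negative self-adjoint with spectra in $[0,\infty)$, so the same scalar estimates $\sup_{s\geq 0}(s+a)^{-1}=a^{-1}$ and $\sup_{s\geq 0}s(s+a)^{-2}=(4a)^{-1}$ do all the work. The only minor point to state carefully is the rewriting $K(K^*Q_a^{-1})=QQ_a^{-1}$ together with the identity $\|B\|^2=\|BB^*\|$ applied to $B=Q_a^{-1}K$; once that is in place the bound is immediate. (Alternatively one could bypass the spectral theorem for $Q$ by noting that $K^*K$ and $KK^*$ have the same non-zero eigenvalues and quoting \lemref{lemad1} directly, but the spectral-theorem argument above is self-contained and just as short.)
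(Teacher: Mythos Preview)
Your argument is correct. For \eqref{cq2} you do exactly what the paper does: invoke the spectral theorem for $Q=Q^*\geq 0$ and bound $\sup_{s\geq 0}(s+a)^{-1}$.

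For \eqref{cq3} your route differs from the paper's. You use the $C^*$-identity $\|B\|^2=\|BB^*\|$ with $B=Q_a^{-1}K$ to reduce to $\|Q_a^{-1}QQ_a^{-1}\|=\sup_{s\geq 0}s(s+a)^{-2}=(4a)^{-1}$, working entirely with the spectral calculus of $Q$. The paper instead passes to $T=K^*K$ via the intertwining identity $Q_a^{-1}K=KT_a^{-1}$ and then uses the polar decomposition $K=UT^{1/2}$, $\|U\|=1$, to get $\|KT_a^{-1}\|\leq\|T^{1/2}T_a^{-1}\|=\sup_{s\geq 0}s^{1/2}(s+a)^{-1}=(2\sqrt{a})^{-1}$. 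Both arguments are short and rigorous; yours has the mild advantage of never leaving the functional calculus of $Q$ and not needing polar decomposition, while the paper's approach makes explicit the link between $Q_a^{-1}K$ and $KT_a^{-1}$, an identity that is itself of independent use elsewhere in the analysis.
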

\begin{proof} Since $Q=Q^*\geq 0$, one uses the spectral theorem and
gets:
$$\|Q_a^{-1}\|=\sup_{s>0}\frac{1}{s+a}\leq
\frac{1}{a}.$$ Inequality \eqref{cq3} follows from the identity
\be\label{cq4}Q_a^{-1}K=KT_a^{-1},\quad T:=K^*K,\quad T_a:=T+aI, \ee
and the estimate \be \|KT_a^{-1}\|=\|UT^{1/2}T_a^{-1}\|\leq
\|T^{1/2}T_a^{-1}\|=\sup_{s\geq 0}\frac{s^{1/2}}{a+s}\leq
\frac{1}{2\sqrt{a}}, \ee where the polar decomposition was used:
$K=UT^{1/2}$, $U$ is a partial isometry, $\|U\|=1$.  \lemref{lemq1}
is proved.
\end{proof}

\begin{lem}\label{lemq2}
Suppose $m$ is chosen so that \be\label{mcq} \|Q-Q^{(m)}\|\leq
\epsilon a,\quad \epsilon\in(0,1/2],\quad a>0. \ee Then the
following estimates hold: \be\label{esa} \|Q_{a,m}^{-1}\|\leq
\frac{2}{a},\ee \be\label{esb} \|Q_{a,m}^{-1}K\|\leq
\frac{1}{\sqrt{a}}.\ee
\end{lem}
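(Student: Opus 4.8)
The plan is to mirror the proof of \lemref{lemad2} exactly, since \lemref{lemq2} is the $Q$-analogue of that lemma and the hypothesis \eqref{mcq} is the $Q$-analogue of \eqref{c3}. First I would write the perturbation identity
\be\label{dsQ}
Q_{a,m}=Q_a\left[I+Q_a^{-1}(Q^{(m)}-Q)\right],
\ee
and use \eqref{cq2} together with \eqref{mcq} to bound $\|Q_a^{-1}(Q^{(m)}-Q)\|\leq \|Q_a^{-1}\|\,\|Q^{(m)}-Q\|\leq \epsilon<1$. This makes $I+Q_a^{-1}(Q^{(m)}-Q)$ boundedly invertible by the Neumann series, and since $Q_a$ is invertible (again by \eqref{cq2}) we conclude $Q_{a,m}$ is invertible with
\be
Q_{a,m}^{-1}=\left[I+Q_a^{-1}(Q^{(m)}-Q)\right]^{-1}Q_a^{-1}.
\ee

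Next I would estimate the Neumann-series factor: since $\epsilon\in(0,1/2]$,
\be
\left\|\left[I+Q_a^{-1}(Q^{(m)}-Q)\right]^{-1}\right\|\leq \frac{1}{1-\epsilon}\leq 2.
\ee
Combining this with $\|Q_a^{-1}\|\leq 1/a$ from \eqref{cq2} gives $\|Q_{a,m}^{-1}\|\leq 2/a$, which is \eqref{esa}. For \eqref{esb} I would write $Q_{a,m}^{-1}K=\left[I+Q_a^{-1}(Q^{(m)}-Q)\right]^{-1}Q_a^{-1}K$ and apply the same factor bound together with $\|Q_a^{-1}K\|\leq \tfrac{1}{2\sqrt{a}}$ from \eqref{cq3}, yielding $\|Q_{a,m}^{-1}K\|\leq \tfrac{1}{1-\epsilon}\cdot\tfrac{1}{2\sqrt a}\leq \tfrac{1}{\sqrt a}$, which is \eqref{esb}.

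There is no real obstacle here: the argument is a routine transcription of \lemref{lemad2}'s proof with $Q=KK^*$ in place of $T=K^*K$ and $K$ in place of $K^*$, relying on the fact that $Q$ is self-adjoint and nonnegative (so the spectral-theorem bounds of \lemref{lemq1} are available) in exactly the same way that \lemref{lemad2} relied on \lemref{lemad1}. The only point worth a moment's care is making sure the perturbation $Q^{(m)}-Q$ enters with the correct sign and that the bound $\epsilon\le 1/2$ is used twice (once to get invertibility of the bracket and once to get the constant $2$), but both are immediate from \eqref{mcq}.
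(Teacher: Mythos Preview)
Your proposal is correct and is exactly the approach the paper has in mind: the paper omits the proof entirely, stating only that it ``is similar to the proof of \lemref{lemad2},'' and your argument is precisely that transcription, with \lemref{lemq1} playing the role of \lemref{lemad1}.
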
Proof of \lemref{lemq2} is similar to the proof of \lemref{lemad2}
and is omitted.\\

We propose the following \textit{stopping rule}:\\Choose $n_\dl$ so
that the following inequalities hold \be\label{srule}
G_{n_\dl,m_{n_\dl}}\leq C\dl^\varepsilon<G_{n,m_n},\quad 1\leq
n<n_\dl,\ C>2,\quad \varepsilon\in(0,1), \ee \textit{where}
\be\begin{split}\label{Gnm}
G_{n,m_n}&=qG_{n-1,m_{n-1}}+(1-q)a_n\|Q_{a_n,m_n}^{-1}f_\dl\|,\\
G_{0,m_0}&=0,\quad G_{1,m_1}\geq C\dl^\varepsilon,\quad
a_n=qa_{n-1},\quad a_0=\alpha_0=const>0,\end{split}\ee \textit{and}
\be\label{Qam} Q_{a,m}:=Q^{(m)}+aI.\ee The discrepancy-type
principle \eqref{srule} is derived from the following discrepancy
principle for DSM proposed in \cite{RAMM525a, RAMM525b}: \be
\int_0^{t_\dl}e^{-(t_\dl-s)}a(s)\|Q_{a(s)}^{-1}f_\dl\|ds=C\dl,\quad
C>1, \ee where $t_\dl$ is the stopping time, and we assume that
$$a(t)>0,\quad a(t)\searrow 0.$$ The derivation of the stopping rule
\eqref{srule} with $Q^{(m)}=Q$ is given in \cite{SWIAGR09}. Let us
prove that there exists an integer $n_\dl$ such that inequalities
\eqref{srule} hold. To prove the existence of such an integer, we
derive some properties of the sequence $G_{n,m_n}$ defined in
\eqref{Gnm}. Using \lemref{lemq2}, the relation $Ky=f$, and the
assumption $\|f_\dl-f\|\leq \dl$, we get \be\begin{split}
a_n\|Q_{a_n,m_n}^{-1}f_\dl\|&\leq
a_n\|Q_{a_n,m_n}^{-1}(f_\dl-f)\|+a_n\|Q_{a_n,m_n}^{-1}f\|\\
&\leq 2\dl+2\sqrt{a_n}\|y\|,
\end{split}\ee where estimates \eqref{esa} and \eqref{esb} were
used. This, together with \eqref{Gnm}, yield \be G_{n,m_n}\leq
qG_{n-1,m_{n-1}}+(1-q)2\dl+(1-q)2\sqrt{a_n}\|y\|, \ee so \be
G_{n,m_n}-2\dl\leq
q(G_{n-1,m_{n-1}}-2\dl)+(1-q)2\sqrt{q}\sqrt{a_{n-1}}\|y\|, \ee where
the relation $a_n=qa_{n-1}$, $a_0=\alpha_0=const>0$, was used.
Define \be\label{dHn}\Psi_n:=G_{n,m_n}-2\dl,\ee where $G_{n,m}$ is
defined in \eqref{Gnm}, and let
\be\label{dwn}\psi_n:=(1-q)2\sqrt{a_{n}}\|y\|.\ee Then \be
\Psi_n\leq q\Psi_{n-1}+\sqrt{q}\psi_{n-1}.\ee
\begin{lem}\label{lem3}
If  \eqref{dHn} and \eqref{dwn} hold,
then
\be\label{eHn} \Psi_n\leq
\frac{1}{1-\sqrt{q}}\psi_n,\quad n\geq 0.\ee
\end{lem}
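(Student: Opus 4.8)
The plan is to prove the bound \eqref{eHn} by induction on $n$, using the recursion $\Psi_n \le q\Psi_{n-1} + \sqrt{q}\,\psi_{n-1}$ together with the key observation that $\psi_n = (1-q)2\sqrt{a_n}\|y\|$ is a geometric sequence with ratio $\sqrt{q}$, since $a_n = qa_{n-1}$ gives $\sqrt{a_n} = \sqrt{q}\sqrt{a_{n-1}}$, hence $\psi_{n-1} = \psi_n/\sqrt{q}$. This is exactly the same device used in the proof of \lemref{lem22}, where the ratio $h_k/h_{k+1} = \sqrt{q}$ was exploited.

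For the base case $n=0$: one has $G_{0,m_0} = 0$, so $\Psi_0 = -2\dl \le 0 \le \frac{1}{1-\sqrt{q}}\psi_0$, since $\psi_0 = (1-q)2\sqrt{\alpha_0}\|y\| > 0$ (and $1-\sqrt{q} > 0$ because $q \in (0,1)$). For the inductive step, assume $\Psi_{k} \le \frac{1}{1-\sqrt{q}}\psi_{k}$. Then
\be
\Psi_{k+1} \le q\Psi_k + \sqrt{q}\,\psi_k
\le \frac{q}{1-\sqrt{q}}\psi_k + \sqrt{q}\,\psi_k
= \left(\frac{q}{1-\sqrt{q}} + \sqrt{q}\right)\psi_k.
\ee
Now substitute $\psi_k = \psi_{k+1}/\sqrt{q}$, which turns the right-hand side into $\left(\frac{q}{1-\sqrt{q}} + \sqrt{q}\right)\frac{1}{\sqrt{q}}\psi_{k+1} = \left(\frac{\sqrt{q}}{1-\sqrt{q}} + 1\right)\psi_{k+1} = \frac{1}{1-\sqrt{q}}\psi_{k+1}$, completing the induction.

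There is no real obstacle here; the only point requiring a little care is the arithmetic identity $\frac{q}{1-\sqrt{q}} + \sqrt{q} = \frac{q + \sqrt{q}(1-\sqrt{q})}{1-\sqrt{q}} = \frac{q + \sqrt{q} - q}{1-\sqrt{q}} = \frac{\sqrt{q}}{1-\sqrt{q}}$, so that after dividing by $\sqrt{q}$ and adding $1$ one lands precisely on $\frac{1}{1-\sqrt{q}}$. One should also note at the outset that $1-\sqrt{q} > 0$ so that all the fractions are positive and the inequalities preserve direction. The structure of the argument mirrors \lemref{lem22} almost verbatim, with $\Psi_n$ in place of $H_n$ and $\psi_n$ (up to the $\sqrt{q}$ normalization) in place of $h_n$.
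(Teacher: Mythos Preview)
Your proof is correct and follows essentially the same approach as the paper's own proof: both proceed by induction on $n$, handle the base case via $\Psi_0=-2\dl\le 0$, and in the inductive step combine the recursion $\Psi_{k+1}\le q\Psi_k+\sqrt{q}\,\psi_k$ with the geometric relation $\psi_k/\psi_{k+1}=1/\sqrt{q}$ and the arithmetic identity $\frac{q}{1-\sqrt{q}}+\sqrt{q}=\frac{\sqrt{q}}{1-\sqrt{q}}$. The only cosmetic difference is the order in which you perform the simplification and the substitution $\psi_k=\psi_{k+1}/\sqrt{q}$; otherwise the arguments coincide line for line.
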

\begin{proof}
Let us prove this lemma by induction. For $n=0$ we get
$$\Psi_0=-2\dl\leq \frac{1}{1-\sqrt{q}}\psi_0. $$ Suppose estimate
\eqref{eHn} is true for $0\leq n\leq k. $ Then \be\begin{split}
\Psi_{k+1}&\leq q\Psi_k+\sqrt{q}\psi_k\leq
\frac{q}{1-\sqrt{q}}\psi_k+\sqrt{q}\psi_k=\frac{\sqrt{q}}{1-\sqrt{q}}\psi_k\\
&=\frac{\sqrt{q}}{1-\sqrt{q}}\frac{\psi_k}{\psi_{k+1}}\psi_{k+1}\leq
\frac{\sqrt{q}}{1-\sqrt{q}}\frac{1}{\sqrt{q}}\psi_{k+1}=\frac{1}
{1-\sqrt{q}}\psi_{k+1}.
\end{split}\ee
Here we have used the relation
\be
\frac{\psi_k}{\psi_{k+1}}=\frac{(1-q)2\sqrt{a_k}\|y\|}
{(1-q)2\sqrt{a_{k+1}}\|y\|}=\frac{\sqrt{a_k}}{\sqrt{a_{k+1}}}=
\frac{\sqrt{a_k}}{\sqrt{qa_k}}=\frac{1}{\sqrt{q}}.
\ee

Thus, \lemref{lem3} is proved.
\end{proof}
By definitions \eqref{dHn}, \eqref{dwn}, and \lemref{lem3}, we get
the estimate \be\label{MeGm} G_{n,m_n}\leq
2\dl+\frac{1}{1-\sqrt{q}}(1-q)2\sqrt{a_{n}}\|y\|,\quad n\geq 0,\ee
so \be \limsup_{n\to \infty} G_{n,m_n}\leq 2\dl \ee because
$\lim_{n\to \infty} a_n=0.$\\ Since $G_{1,m_1}\geq
C\dl^\varepsilon$, $C>2$, $\varepsilon\in(0,1)$ and $\limsup_{n\to
\infty}G_{n,m_n}\leq 2\dl$, it follows that there exists an integer
$n_\dl$ such that inequalities \eqref{srule} hold. The uniqueness of
the integer $n_\dl$ follows
from its definition.\\
\begin{lem}\label{lem4}
If $n_\dl$ is chosen by the rule \eqref{srule}, then \be
\frac{\dl}{\sqrt{a_{n_\dl}}}\to 0 \text{ as }\dl\to 0.\ee
\end{lem}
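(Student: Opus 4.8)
The plan is to extract the needed decay from the lower inequality in the stopping rule \eqref{srule}, namely $C\dl^\varepsilon < G_{n,m_n}$ for all $1\le n<n_\dl$. Applied at $n=n_\dl-1$ (assuming $n_\dl\ge 2$; the case $n_\dl=1$ is trivial since then $a_{n_\dl}=\alpha_0 q$ is bounded below and $\dl\to 0$), this gives $C\dl^\varepsilon < G_{n_\dl-1,m_{n_\dl-1}}$. Now I would feed this into the upper bound \eqref{MeGm}, which says
\[
G_{n_\dl-1,m_{n_\dl-1}}\le 2\dl+\frac{(1-q)2}{1-\sqrt{q}}\sqrt{a_{n_\dl-1}}\,\|y\|.
\]
Combining, $C\dl^\varepsilon \le 2\dl + c_1\sqrt{a_{n_\dl-1}}\|y\|$ with $c_1:=2(1-q)/(1-\sqrt q)$. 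Since $\varepsilon\in(0,1)$, for $\dl$ small we have $2\dl \le \tfrac{C}{2}\dl^\varepsilon$, so $\tfrac{C}{2}\dl^\varepsilon \le c_1\sqrt{a_{n_\dl-1}}\|y\|$, i.e.
\[
\sqrt{a_{n_\dl-1}} \ge \frac{C}{2c_1\|y\|}\,\dl^\varepsilon.
\]

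Next I convert this into a lower bound on $\sqrt{a_{n_\dl}}$ using the geometric relation $a_{n_\dl}=q\,a_{n_\dl-1}$, so $\sqrt{a_{n_\dl}}=\sqrt q\,\sqrt{a_{n_\dl-1}}\ge \frac{C\sqrt q}{2c_1\|y\|}\dl^\varepsilon$. Therefore
\[
\frac{\dl}{\sqrt{a_{n_\dl}}} \le \frac{2c_1\|y\|}{C\sqrt q}\,\dl^{1-\varepsilon},
\]
and since $1-\varepsilon>0$, the right-hand side tends to $0$ as $\dl\to 0$. This is the desired conclusion. I should also dispose of the degenerate possibility $\|y\|=0$: then $y=0$ is itself the solution, the claim is vacuous or follows directly once one checks the stopping rule still terminates (here one may simply assume $y\ne 0$, or note that if $\|y\|=0$ then $G_{n,m_n}\le 2\dl$ for all $n\ge0$ by \eqref{MeGm}, forcing $n_\dl=1$ for small $\dl$ and $a_{n_\dl}=\alpha_0 q$ bounded away from $0$).

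The main obstacle, such as it is, is bookkeeping the off-by-one in the index: the stopping rule only guarantees the lower bound $G_{n,m_n}>C\dl^\varepsilon$ strictly \emph{before} $n_\dl$, so one must evaluate at $n_\dl-1$ and then pay a factor $\sqrt q$ to pass back to $a_{n_\dl}$; this is harmless because $q$ is a fixed constant in $(0,1)$. One also must handle $n_\dl=1$ separately, but there $a_{n_\dl}=\alpha_0 q$ is a positive constant independent of $\dl$, so $\dl/\sqrt{a_{n_\dl}}=\dl/\sqrt{\alpha_0 q}\to 0$ trivially. Aside from these, the argument is a direct chaining of the already-established bounds \eqref{MeGm} and \eqref{srule}, so no genuine difficulty remains.
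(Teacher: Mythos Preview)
Your proposal is correct and follows essentially the same route as the paper: evaluate the stopping-rule lower bound at $n_\dl-1$, combine it with the upper bound \eqref{MeGm}, and convert to $a_{n_\dl}$ via $a_{n_\dl}=q\,a_{n_\dl-1}$. The only cosmetic difference is in absorbing the $2\dl$ term: the paper uses $C>2$ and $\dl\le 1$ to write $C\dl^\varepsilon-2\dl\ge (C-2)\dl^\varepsilon$ directly, whereas you invoke $2\dl\le\tfrac{C}{2}\dl^\varepsilon$ for small $\dl$; both are fine. Your explicit handling of the edge cases $n_\dl=1$ and $\|y\|=0$ is a welcome addition that the paper omits.
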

\begin{proof}
From the stopping rule \eqref{srule} and estimate \eqref{MeGm} we
get \be C\dl^\varepsilon<G_{n_\dl-1,m_{n_\dl-1}}\leq 2\dl +
\frac{1}{1-\sqrt{q}}(1-q)2\sqrt{a_{n_\dl-1}}\|y\|. \ee This implies
\be \frac{1}{ \sqrt{a_{n_\dl-1}}}\leq
\frac{1}{(1-\sqrt{q})(C-2)\dl^\varepsilon}(1-q)2\|y\|, \ee so \be
\frac{\dl}{\sqrt{a_{n_\dl}}}\leq
\frac{\dl^{1-\varepsilon}}{\sqrt{q}(1-\sqrt{q})(C-2)}(1-q)2\|y\|\to
0 \text{ as } \dl\to 0. \ee \lemref{lem4} is proved.
\end{proof}
\begin{lem}\label{lem5}
If $n_\dl$ is chosen by the rule \eqref{srule}, then
\be\label{rel1}\lim_{\dl\to 0} n_\dl= \infty.\ee\end{lem}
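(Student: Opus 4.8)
The plan is to argue by contradiction, using the defining inequality $G_{n_\dl,m_{n_\dl}}\le C\dl^\varepsilon$ from the stopping rule \eqref{srule} to force a lower bound on $G_{n_\dl,m_{n_\dl}}$ that cannot vanish unless $n_\dl\to\infty$. Suppose $n_\dl$ does \emph{not} tend to infinity as $\dl\to 0$; then there is a subsequence $\dl_k\to 0$ along which $n_{\dl_k}=N$ is constant for some fixed integer $N\ge 1$. Along this subsequence the index $N$ is fixed, so the parameters $a_1,\dots,a_N$ and $m_1,\dots,m_N$ are fixed, and the only quantity in $G_{N,m_N}$ that depends on $\dl_k$ is the noisy data $f_{\dl_k}$.

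The key step is to show that $G_{N,m_N}$ does not go to $0$ along this subsequence. First I would unfold the recursion \eqref{Gnm} to write $G_{N,m_N}=\sum_{j=1}^{N} c_j\, a_j\|Q_{a_j,m_j}^{-1}f_{\dl_k}\|$ with fixed positive coefficients $c_j$ (coming from the geometric weights $q$, $1-q$), exactly as in \lemref{slem21}. Then I would use $\|f_{\dl_k}-f\|\le\dl_k\to 0$ together with the fixed bounded operators $a_j Q_{a_j,m_j}^{-1}$ (bounded by \eqref{esa}) to conclude $a_j\|Q_{a_j,m_j}^{-1}f_{\dl_k}\|\to a_j\|Q_{a_j,m_j}^{-1}f\|$ as $k\to\infty$; hence $G_{N,m_N}\to G_N^{(0)}:=\sum_{j=1}^N c_j\, a_j\|Q_{a_j,m_j}^{-1}f\|$, a fixed nonnegative number. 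If $G_N^{(0)}>0$, then for large $k$ we would have $G_{N,m_N}\ge \tfrac12 G_N^{(0)}>C\dl_k^\varepsilon$ (since $\dl_k^\varepsilon\to0$), contradicting the stopping rule $G_{n_{\dl_k},m_{n_{\dl_k}}}=G_{N,m_N}\le C\dl_k^\varepsilon$. So it remains to exclude $G_N^{(0)}=0$. Since every $c_j>0$ and every $a_j>0$, $G_N^{(0)}=0$ would force $Q_{a_j,m_j}^{-1}f=0$, hence $f=0$; but then $y\perp\mathcal N(K)$ and $Ky=f=0$ give $y=0$, which is a trivial (excluded) case — one handles it by noting that if $y=0$ the claim is vacuous or by the standing assumption that the true solution is nonzero. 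Alternatively, and more robustly, one uses the other inequality in \eqref{srule}, namely $G_{N-1,m_{N-1}}>C\dl_k^\varepsilon$ when $N=n_{\dl_k}\ge 2$: taking $k\to\infty$ gives $G_{N-1}^{(0)}\ge 0$ with no immediate contradiction, so this branch needs the monotone comparison instead — the cleanest route is still the contradiction via $G_N^{(0)}>0$, which holds whenever $f\ne0$.

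The main obstacle is the case $N=1$, i.e.\ $n_{\dl_k}\equiv 1$. Here the stopping rule demands simultaneously $G_{1,m_1}\le C\dl_k^\varepsilon$ and (by the initialization in \eqref{Gnm}) $G_{1,m_1}\ge C\dl_k^\varepsilon$; but the initialization $G_{1,m_1}\ge C\dl^\varepsilon$ built into the scheme is precisely what prevents $n_\dl=1$ from being selected as the \emph{first} index satisfying \eqref{srule} except in degenerate situations, so I would argue that $G_{1,m_1}=(1-q)a_1\|Q_{a_1,m_1}^{-1}f_{\dl_k}\|\to (1-q)a_1\|Q_{a_1,m_1}^{-1}f\|>0$ while the inequality $G_{1,m_1}\le C\dl_k^\varepsilon\to 0$ forces this limit to be $0$, a contradiction as long as $f\ne0$. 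Thus in all cases $N$ cannot stay bounded, and since $n_\dl$ is a well-defined integer for each $\dl$ (existence and uniqueness were established above), we conclude $n_\dl\to\infty$ as $\dl\to0$. I would close by remarking that combining this with \lemref{lem4} verifies both conditions in \eqref{asdel}, so that \thmref{thm21} applies to the stopping rule \eqref{srule}.
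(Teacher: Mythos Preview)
Your contradiction argument is correct (under the standing hypothesis $f\neq 0$, which the paper also needs), but it is genuinely different from the paper's proof. The paper does not freeze $N$ and pass to the limit in the data; instead it manipulates the recursion \eqref{Gnm} together with $G_{n_\dl,m_{n_\dl}}\le C\dl^\varepsilon$ and $G_{n_\dl-1,m_{n_\dl-1}}>C\dl^\varepsilon$ to extract the single-term bound
\[
a_{n_\dl}\|Q_{a_{n_\dl},m_{n_\dl}}^{-1}f_\dl\|<C\dl^\varepsilon\to 0,
\]
then compares $Q_{a_{n_\dl},m_{n_\dl}}^{-1}$ with $Q_{a_{n_\dl}}^{-1}$ via \eqref{cq1} and \eqref{esa} to get $a_{n_\dl}\|Q_{a_{n_\dl}}^{-1}f_\dl\|\to 0$, and finally invokes a spectral-theorem argument (\lemref{lemand}) to deduce $a_{n_\dl}\to 0$, hence $n_\dl\to\infty$.

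Your route is more elementary: it avoids the spectral calculus entirely and uses only continuity of the finitely many fixed bounded operators $Q_{a_j,m_j}^{-1}$, $j\le N$. The trade-off is that you must assume the $m_j$ for $j\le N$ do not vary with $\dl$; this is how the paper in fact sets things up (the $m_i$ are determined by $a_i$ alone, cf.\ \eqref{rmi}), but the paper's proof does not need this, since it works at the moving index $n_\dl$ and only uses the uniform operator bounds \eqref{cq1}--\eqref{esa}. The paper's argument also yields the intermediate conclusion $a_{n_\dl}\to 0$ directly, which is conceptually tied to the discrepancy principle; your argument reaches $n_\dl\to\infty$ without isolating that step. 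Your discussion of the case $N=1$ and of $f=0$ is adequate and matches the paper's implicit assumption $f\neq 0$ in \lemref{lemand}.
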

\begin{proof}
 From the stopping rule \eqref{srule} we get
\be\begin{split}
&qC\dl^\varepsilon+(1-q)a_{n_\dl}\|Q_{a_{n_\dl},m_{n_\dl}}^{-1}f_\dl\|<
qG_{n_\dl-1,m_{n_\dl-1}}+(1-q)a_{n_\dl}\|Q_{a_{n_\dl},m_{n_\dl}}^{-1}f_\dl\|\\
&=G_{n_\dl,m_{n_\dl}}<C\dl^\varepsilon.
\end{split}\ee This implies \be\label{mes3} 0\leq
a_{n_\dl}\|Q_{a_{n_\dl},m_{n_\dl}}^{-1}f_\dl\|<C\dl^\varepsilon\to 0
\text{ as } \dl\to 0. \ee Note that \be\begin{split} 0&\leq
a_{n_\dl}\|Q_{a_{n_\dl}}^{-1}f_\dl\|\leq
a_{n_\dl}\|(Q_{a_{n_\dl}}^{-1}-Q_{a_{n_\dl},m_{n_\dl}}^{-1})f_\dl\|
+a_{n_\dl}\|Q_{a_{n_\dl},m_{n_\dl}}^{-1}f_\dl\|\\
&=a_{n_\dl}\|Q_{a_{n_\dl}}^{-1}(Q_{a_{n_\dl},m_{n_\dl}}-Q_{a_{n_\dl}})Q_{a_{n_\dl},m_{n_\dl}}^{-1}f_\dl\|
+a_{n_\dl}\|Q_{a_{n_\dl},m_{n_\dl}}^{-1}f_\dl\|\\
&=a_{n_\dl}\|Q_{a_{n_\dl}}^{-1}(Q^{(m_{n_\dl})}-Q)Q_{a_{n_\dl},m_{n_\dl}}^{-1}f_\dl\|
+a_{n_\dl}\|Q_{a_{n_\dl},m_{n_\dl}}^{-1}f_\dl\|\\
&\leq
a_{n_\dl}\|Q_{n_\dl}^{-1}\|\|Q^{(m_{n_\dl})}-Q\|\|Q_{a_{n_\dl},m_{n_\dl}}^{-1}f_\dl\|
+a_{n_\dl}\|Q_{a_{n_\dl},m_{n_\dl}}^{-1}f_\dl\|\\
&\leq a_{n_\dl}\frac{2}{a_{n_\dl}}\epsilon
a_{n_\dl}\|Q_{a_{n_\dl},m_{n_\dl}}^{-1}f_\dl\|
+a_{n_\dl}\|Q_{a_{n_\dl},m_{n_\dl}}^{-1}f_\dl\|\\
&\leq 2a_{n_\dl}\|Q_{a_{n_\dl},m_{n_\dl}}^{-1}f_\dl\|,\\
\end{split}\ee where estimates \eqref{cq1}, \eqref{esa} and
$0<\epsilon<\frac{1}{2}$ were used. This, together with
\eqref{mes3}, yield \be\label{mes4} \lim_{\dl\to
0}a_{n_\dl}\|Q_{a_{n_\dl}}^{-1}f_\dl\|=0. \ee To prove relation
\eqref{rel1} the following lemma is needed:

\begin{lem}\label{lemand}Suppose condition $\|f-f_\dl\|\leq \dl$ and relation \eqref{mes4} hold. Then \be\label{dan} \lim_{\dl\to 0}a_{n_\dl}=
0.\ee
\end{lem}\begin{proof}
If $f\neq 0$ then there exists a $\lambda_0>0$ such that
\be\label{Flo} F_{\lambda_0}f\neq 0,\quad \langle
F_{\lambda_0}f,f\rangle:=\xi>0, \ee where $\xi$ is a constant which
does not depend on $\dl$, and $F_s$ is the resolution of the
identity corresponding to the operator $Q:=KK^*$. Let
$$h(\dl,\alpha):=\alpha^2\|Q_{\alpha}^{-1}f_\dl\|^2,\quad Q:=KK^*,\ Q_a:=aI+Q.$$ For a
fixed number $c_1>0$ we obtain \be\begin{split}
h(\dl,c_1)&=c_1^2\|Q_{c_1}f_\dl\|^2=\int_0^\infty
\frac{c_1^2}{(c_1+s)^2}d\langle F_sf_\dl,f_\dl\rangle\geq
\int_0^{\lambda_0} \frac{c_1^2}{(c_1+s)^2}d \langle
F_sf_\dl,f_\dl\rangle\\
&\geq \frac{c_1^2}{(c_1+\lambda_0)^2}\int_0^{\lambda_0} d\langle
F_sf_\dl,f_\dl\rangle=\frac{c_1^2\|F_{\lambda_0}f_\dl\|^2}{(c_1+\lambda_0)^2},\quad
\dl>0.
\end{split}\ee
Since $F_{\lambda_0}$ is a continuous operator, and
$\|f-f_\dl\|<\dl$, it follows from \eqref{Flo} that \be \lim_{\dl\to
0}\langle F_{\lambda_0}f_\dl,f_\dl\rangle=\langle
F_{\lambda_0}f,f\rangle>0 .\ee Therefore, for the fixed number
$c_1>0$ we get \be\label{hc1} h(\dl,c_1)\geq c_2>0\ee for all
sufficiently small $\dl>0$, where $c_2$ is a constant which does not
depend on $\dl$. For example one may take $c_2=\frac{\xi}{2}$
provided that \eqref{Flo} holds. It follows from relation
\eqref{mes4} that \be\label{limhq}\lim_{\dl\to 0} h(\dl,
a_{n_\dl})=0.\ee Suppose $\lim_{\dl\to 0}a_{n_\dl}\neq 0$. Then
there exists a subsequence $\dl_j\to 0$ such that \be
\alpha_0a_{n_{\dl_j}}\geq c_1>0, \ee where $c_1$ is a constant. By
\eqref{hc1} we get\be\label{hdj} h(\dl_j,a_{n_{\dl_j}})>c_2>0,\quad
\dl_j\to 0\text{ as }j\to \infty. \ee This contradicts relation
\eqref{limhq}. Thus, $\lim_{\dl\to0} a_{n_\dl}=0.$\\ \lemref{lemand}
is proved.
\end{proof}Applying \lemref{lemand} with $a_{n_\dl}=\alpha_0 q^{n_\dl}$, $q\in(0,1)$, $\alpha_0>0$, one gets relation \eqref{rel1}.\\
\lemref{lem5} is proved.
\end{proof}
We formulate the main result of this paper in the following theorem:
\begin{thm}\label{MRT}
Suppose $m_i$ are chosen so that conditions
\eqref{mcon1}-\eqref{mcon3} and \eqref{cq1} hold, and $n_\dl$ is
chosen by rule \eqref{srule}. Then \be\label{MR} \lim_{\dl\to
0}\|u_{n_\dl,m_{n_\dl}}^\dl-y\|=0. \ee
\end{thm}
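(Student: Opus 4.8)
The plan is to combine Theorem~\ref{thm21} with the verification, carried out in Lemmas~\ref{lem4} and~\ref{lem5}, that the stopping index $n_\dl$ defined by rule \eqref{srule} satisfies the two hypotheses \eqref{asdel} needed in Theorem~\ref{thm21}. Concretely, Theorem~\ref{thm21} asserts that if $m_i$ are chosen so that \eqref{mcon1}--\eqref{mcon3} hold and if $n_\dl$ is any stopping index with $\lim_{\dl\to 0}n_\dl=\infty$ and $\lim_{\dl\to 0}\dl/\sqrt{a_{n_\dl}}=0$, then $\lim_{\dl\to 0}\|u_{n_\dl,m_{n_\dl}}^\dl-y\|=0$. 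So the only work left is to check that the adaptive choice of $n_\dl$ via \eqref{srule} does produce such a sequence.

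First I would recall that, under the assumption \eqref{cq1} on the approximations $Q^{(m_i)}$ of $Q=KK^*$, the existence and uniqueness of an integer $n_\dl$ satisfying \eqref{srule} has already been established in the paragraph preceding Lemma~\ref{lem4}: this follows from $G_{1,m_1}\geq C\dl^\varepsilon$ together with $\limsup_{n\to\infty}G_{n,m_n}\leq 2\dl$, which in turn comes from estimate \eqref{MeGm} and $a_n\to 0$. Then I would invoke Lemma~\ref{lem4}, which gives exactly $\dl/\sqrt{a_{n_\dl}}\to 0$ as $\dl\to 0$, and Lemma~\ref{lem5}, which gives $n_\dl\to\infty$ as $\dl\to 0$. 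Together these are precisely the two relations in \eqref{asdel}.

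Having verified \eqref{asdel}, I would simply apply Theorem~\ref{thm21} with this particular $n_\dl$. Since \eqref{mcon1}--\eqref{mcon3} are among the standing hypotheses of Theorem~\ref{MRT}, the conclusion \eqref{mcuy} of Theorem~\ref{thm21} reads $\lim_{\dl\to 0}\|u_{n_\dl,m_{n_\dl}}^\dl-y\|=0$, which is exactly \eqref{MR}.

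There is essentially no hard step remaining: the entire analytic content has been front-loaded into Theorem~\ref{thm21} (the convergence of $u_{n,m_n}$ to $y$, built from Lemmas~\ref{lemad1}--\ref{lem22}) and into Lemmas~\ref{lem4} and~\ref{lem5} (the two properties of the adaptive stopping index). The one point that deserves a word of care is making sure the hypotheses match up cleanly — in particular that condition \eqref{cq1}, which is needed only for the stopping-rule analysis of the sequence $G_{n,m_n}$ (via Lemma~\ref{lemq2} and the bound \eqref{MeGm}), is indeed included among the assumptions of Theorem~\ref{MRT}, as it is. So the proof is a short assembly: cite the existence/uniqueness of $n_\dl$, cite Lemmas~\ref{lem4} and~\ref{lem5} for \eqref{asdel}, and apply Theorem~\ref{thm21}.
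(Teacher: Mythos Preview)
Your proposal is correct and follows essentially the same approach as the paper: verify via Lemmas~\ref{lem4} and~\ref{lem5} that the stopping index $n_\dl$ from rule \eqref{srule} satisfies the two conditions in \eqref{asdel}, and then invoke the convergence result established in Theorem~\ref{thm21}. The only cosmetic difference is that the paper re-quotes the intermediate estimate \eqref{et27} and the fact $J(n)\to 0$ from inside the proof of Theorem~\ref{thm21}, whereas you cite Theorem~\ref{thm21} as a black box; both are equivalent.
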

\begin{proof}
From \eqref{et27} we get the estimate \be
\|y-u_{n_\dl,m_{n_\dl}}^\dl\|\leq
2\left(J(n_\dl)+\frac{\dl}{(1-q^{3/2})\sqrt{a_{n_\dl}}}\right),
  \ee where $J(n)$ is defined in \eqref{Jn}. It is proved in \thmref{thm21} that $\lim_{n\to \infty}J(n)=0.$
By \lemref{lem5}, one gets $n_\dl\to \infty$ as $\dl\to 0$, so
$\lim_{\dl\to 0}J(n_\dl)= 0$. From \lemref{lem4} we get
$\lim_{\dl\to 0}\frac{\dl}{\sqrt{a_{n_\dl}}}=0.$ Thus,
$$\lim_{\dl\to 0}\|y-u_{n_\dl,m_{n_\dl}}^\dl\|=0.$$ \thmref{MRT} is
proved.
\end{proof}
\section{Numerical experiments}
Consider the following Fredholm integral equation: \be\label{ILT}
Ku(s):=\int_{0}^1e^{-st}u(t)dt=f(s),\quad s\in[0,1].\ee
 The function
$u(t)=t$ is the solution to equation \eqref{ILT} corresponding to
$f(s)=\frac{1-(s+1)e^{-s}}{s^2}$. We perturb the exact data $f(s)$
by a random noise $\dl $, $\dl>0$, and get the noisy data
$f_\dl(s)=f(s)+\dl $. The compound Simpson's rule (see
\cite{PDPR84}) with the step size $\frac{1}{2^{m}}$ is used to
approximate the kernel $g(x,z),$ defined in \eqref{gxz}. This yields
$$T^{(m)}u:=\sum_{j=1}^{2^m+1}\beta_j^{(m)} k(s_j,x)\int_0^1
k(s_j,z)u(z)dz, $$ where $k(s,t):=e^{-st},$ $\beta_j^{(m)}$ are the
compound Simpson's quadrature weights:
$\beta_1^{(m)}=\beta_{2^m+1}^{(m)}=\frac{1/3}{2^m},$ and for
$j=2,3,\hdots, 2^m$ \be\beta_j^{(m)}=\left\{
                 \begin{array}{ll}
                   \frac{4/3}{2^m}, & \hbox{$j$ is even;} \\
                   \frac{2/3}{2^m}, & \hbox{otherwise,}
                 \end{array}
               \right.\ee

 and $s_j$ are the collocation
points: $s_j=\frac{j-1}{2^m}$, $j=1,2,\hdots,2^m+1$.\\ Let
$$\gamma_m:=\|(T-T^{(m)})u\|,$$ $$h(s,x,z):=k(s,x)k(s,z)$$ and
\be\label{cS}c_1:=\frac{1}{180}\max_{x,z\in[0,1]}\max_{s\in[0,1]}\left|\frac{\partial^4h(s,x,z)}{\partial
s^4}\right|=\frac{16}{180}.\ee
Then \be\begin{split}
\gamma_m^2&=\int_0^1 \left| \int_0^1\left(\int_0^1 h(s,x,z)ds-\sum_{j=1}^{2^m+1}\beta_j^{(m)}h(s_j,x,z)\right)u(z)dz \right|^2dx\\
&\leq \int_0^1 \left| \int_0^1\frac{c_1}{2^{4m}} u(z)dz \right|^2dx\leq \left(\frac{c_1}{2^{4m}}\right)^2\|u\|^2.\\
\end{split}\ee
The upper bound $c_1$ for the error
of the compound Simpson's quadrature can be found in \cite{PDPR84}.
Thus,
$$\|T-T^{(m)}\|\leq \frac{c_1}{2^{4m}}\to 0\text{ as }m\to \infty.$$
Similarly, we approximate the kernel $q(x,s)$ defined in \eqref{oq}
by the Simpson's rule with the step size $\frac{1}{2^m}$ and get \be
\|Q-Q^{(m)}\|\leq \frac{c_1}{2^{4m}}\to 0\text{ as }m\to \infty. \ee
 Let us partition the interval $[0,1]$ into $2^m180$, $m>0$,
equisized subintervals $D_j$, where $D_j=[d_{j-1},d_j),$
$j=1,2,\hdots, 2^m$. Then $|d_j-d_{j-1}|=\frac{1}{2^m180},$ $j=1,2,
\hdots, 2^m,$ and using the Taylor expansion of $e^{st}$ about
$s=d_{j-1}$, one gets \be\begin{split}\label{les}
&|e^{-st}-e^{-d_{j-1}t}[1-t(s-d_{j-1})]|\leq
\sum_{l=2}^\infty\frac{(s-d_j)^l}{l!}\leq (s-d_{j-1})^2\sum_{j=0}^\infty(s-d_{j-1})^j\\
&\leq \frac{1}{2^{2m}180^2}\sum_{j=0}^\infty\left(\frac{1}{2^{m}180}\right)^j= \frac{1}{2^{2m}180^2}\frac{2^m180}{2^m180-1}\\
&=\frac{1}{2^m180(2^{m}180-1)}\leq \frac{1}{2^{2m}180},\quad \forall s\in D_j,\ t\in[0,1].\\
\end{split}\ee This allows us to define

\be
K_m^*u(t)=\sum_{j=1}^{2^m}\int_{D_j}e^{-d_{j-1}t}[1-t(s-d_{j-1})]u(s)ds.\ee
This, together with condition \eqref{les}, yields \be\begin{split}
\|(K^*-K_m^*)u\|^2&=\int_0^1\left|\sum_{j=1}^{2^m}\int_{D_j}\left(e^{-st}-e^{-d_{j-1}t}[1-t(s-d_{j-1})]\right)u(t)dt\right|^2ds\\
&\leq\frac{1}{2^{2m}180^2}\int_0^1\left|\sum_{j=1}^{2^m}\int_{D_j}|u(t)|dt\right|^2ds\leq
\frac{1}{2^{4m}180^2}\|u\|^2.
\end{split}\ee Thus,
\be \|K^*-K_m^*\|\leq \frac{1}{2^{2m}180}\to 0\quad as\ m\to \infty.
\ee Moreover \be\begin{split} \|(T^{(m)}-K_m^*K)u\|&\leq
\|(T^{(m)}-T)u\|+\|(T-K_m^*K)u\|\\
&\leq \frac{c_1}{2^{4m}}\|u\|+\|K^*-K_m^*\|\|Ku\|\\
&\leq \frac{16}{2^{4m}180}\|u\|+\frac{1}{2^{2m}180}\|u\|\leq
\frac{17}{2^{2m}180}\|u\|.
\end{split}\ee Here we have used the constant $c_1=16/180$ and the estimate $|k(s,t)|\leq \max_{s,t\in[0,1]}|e^{-st}|=1.$ Thus,
\be \|T^{(m)}-K_m^*K\|\leq \frac{17}{2^{2m}180}. \ee To satisfy
condition \eqref{mcon1} the parameter $m_i$ may be chosen by solving
the equation \be \frac{c_1}{2^{4m_i}}=\frac{a_i}{2}. \ee To get
$m_i$ satisfying condition \eqref{mcon2}, one solves the equation
\be\label{numc2} \frac{17}{2^{2m_i}180}=\eta a_i^2, \ee where
$\eta=const\geq 10$. Here we have used the estimate
$\|T^{(m_i)}-K_{m_i}^*K\|\leq \eta a_i^2$ instead of estimate
\eqref{mcon2}. This estimate will not change our main results. The
reason of using the constant $\eta\geq 10$ than of $1$ in
\eqref{numc2} is to control the decaying rate of the parameter
$a_i^2$ so that the growth rate of the parameter $m_i$ in
\eqref{numc2} can be made as slow as we wish. To obtain the
parameter $m_i$ satisfying condition \eqref{mcon3}, one solves \be
\frac{c_1}{2^{2m_i}}=\frac{\sqrt{a_i}}{2}. \ee Hence to satisfy all
the conditions in \thmref{MRT}, one may choose $m_i$ such that
\be\label{rmi} m_i:=\max\left\{\left\lceil \frac{\ln
(2c_1/a_i)}{4\ln 2}\right\rceil,
\left\lceil\frac{\ln(\frac{17}{180(\eta a_i^2)})}{2\ln
2}\right\rceil,\left\lceil \frac{\ln (2c_1/\sqrt{a_i})}{2\ln
2}\right\rceil\right\},\ee where $\lceil x\rceil$ is the smallest
integer not less than $x$, $c_1$ is defined in \eqref{cS},
$a_i=\alpha_0q^i,\quad \alpha_0>0,\quad q\in(0,1).$ In all the
experiments the parameter $\eta$ in \eqref{rmi} is equal to $10$
which is sufficient for the given problem.
 To obtain the approximate solution to problem \eqref{ILT}, we
consider a finite-dimensional approximate solution
\be\label{apu}u_{n,m_n}^\dl(x):=P_mu(x)=\sum_{j=1}^{2^{m}}\zeta_j^{(m_n,\dl)}\Phi_j(x),\ee
$P_m:L^2[0,1]\to L_m$,
\be\label{Lm}L_m=\text{span}\{\Phi_1,\Phi_2,\hdots, \Phi_{2^m}\},\ee
where $\{\Phi_i\}$ are the Haar basis functions (see \cite{IMSB69}):
$\Phi_1(x)=1$ $\forall x\in[0,1]$, and for $j=2^{l-1}+p,\
l=1,2,\hdots,m,\ p=1,2,\hdots,2^{l-1}$
\be\begin{split} \\
\Phi_j(x)&=\left\{
            \begin{array}{ll}
              2^{(l-1)/2}, & \hbox{$x\in [\frac{p-1}{2^{l-1}},\frac{p-1/2}{2^{l-1}})$;} \\
              -2^{(l-1)/2}, & \hbox{$x\in [\frac{p-1/2}{2^{l-1}},\frac{p}{2^{l-1}})$;} \\
              0, & \hbox{otherwise.}
            \end{array}
          \right. \
  \end{split}\ee

Let us formulate an algorithm for obtaining the approximate solution
to \eqref{ILT} using iterative scheme \eqref{it3}, where the
discrepancy-type principle for DSM defined in Section 3 is used as
the stopping rule.
\begin{itemize}
\item[(1)] Given data: $K$, $f_\dl$, $\dl$;
\item[(2)] initialization : $\alpha_0>0$, $\eta\geq 10$, $q\in(0,1)$, $C>2$, $u_{0,m_0}^\dl=0$, $G_0=0$, $n=1$;
\item[(3)] iterate, starting with $n=1$, and stop until the condition
\eqref{stopit} below holds,
\begin{itemize}
\item[(a)]$ a_n=\alpha_0q^n$,
\item[(b)]choose $m_n=\max\left\{\left\lceil \frac{\ln (2c_1/a_n)}{4\ln
2}\right\rceil, \left\lceil\frac{\ln(17/(180\eta
a_n^2))}{2\ln 2}\right\rceil,\left\lceil \frac{\ln
(2c_1/\sqrt{a_n})}{2\ln 2}\right\rceil\right\}$, where $c_1$
is defined in \eqref{cS}, and $a_n$ are defined in (a),
\item[(c)] construct the vectors $v^\dl$ and $g^\dl$:
\be v^\dl_i:=\la K_{m_n}^*f_\dl,\Phi_i\ra,\quad
i=1,2,\hdots,2^{m_n},\ee \be g^\dl_i=\la f_\dl,\Phi_i\ra
\quad i=1,2,\hdots,2^{m_n}, \ee
\item[(d)] construct the matrices $A_{m_n}$ and $B_{m_n}$:
\be\begin{split}
(A_{m_n})_{ij}:&=\sum_{l=1}^{2^{m_n}+1}\beta_l^{(m_n)}\la
k(s_l,\cdot),\Phi_i\ra \la k(s_l,\cdot)\Phi_j\ra,\\
&i,j=1,2,3,\hdots, 2^{m_n},\end{split}\ee\be\begin{split}
(B_{m_n})_{ij}:&=\sum_{l=1}^{2^{m_n}+1}\eta_l^{(m_n)}\la
k(\cdot,s_l),\Phi_i\ra \la k(\cdot,s_l)\Phi_j\ra,\\
&i,j=1,2,3,\hdots, 2^{m_n},\end{split}\ee where
$\beta_i^{(m_n)}$ and $\eta_l^{(m_n)}$ are the quadrature
weights and $s_l$ are the collocation points,
\item[(e)] solve the following two linear algebraic
systems:\be\label{LAS1}(a_nI+A_{m_n})\zeta^{(m_n,\dl)}=v^\dl,\ee
where $(\zeta^{(m_n,\dl)})_i=\zeta_i^{(m_n,\dl)}$ and
\be\label{LAS2}(a_nI+B_{m_n})\gamma^{(m_n,\dl)}=g^{\dl},\ee
where $(\gamma^{(m_n,\dl)})_i=\gamma^{(m_n,\dl)}_i$,
\item[(f)]update the coefficient $\la \zeta^{(m_n,\dl)},\Phi_i\ra$ of the approximate solution $u_{n,m_n}(x)$ in \eqref{apu} by the
iterative formula:
\be\label{uap}u_{n,m_n}^\dl(x)=qu_{n-1,m_{n-1}}^\dl(x)+(1-q)\sum_{j=1}^{2^{m_n}}\zeta^{(m_n,\dl)}_j
\Phi_j(x) ,\ee where \be u_{0,m_0}^\dl(x)=0, \ee
\end{itemize}
until
\be\label{stopit}G_{n,m_n}=qG_{n-1,m_{n-1}}+a_n\|\gamma^{(m_n,\dl)}\|\leq
C\dl^\varepsilon.\ee
\end{itemize} Since $K$ is a selfadjoint operator, the matrix $B_{m_n}$ in step (d) is equal to the matrix $A_{m_n}$. We measure the accuracy of the approximate solution $u_{m_{n_\dl}}^\dl$ by
the following average error formula: \be
Avg:=\frac{\sum_{j=1}^{100}|u(t_j)-u^\dl_{m_{n_\dl}}(t_j)|}{100},\quad
t_1=0,\quad t_j=0.01j,\ j=2,3,\hdots,99, \ee  where $u(t)$ is the
exact solution to problem \eqref{ILT}. In all the experiments we use
$\alpha_0=1$, $q=0.25$, $C=2.01$ and $\varepsilon=0.99$. The linear
algebraic systems \eqref{LAS1} and \eqref{LAS2} are solved using
MATLAB. The levels of noise: $5\%$, $1\%$, and $.05\%$ are used in
the experiments. For the level of noise $5\%$ the stopping condition
is satisfied at $m_{n_\dl}=2$. The resulting average error is
$0.1095.$ When the noise level $\dl$ is decreased to the level of
noise $1\%$, we get the average error $Avg=0.0513$, so the accuracy
of the approximate solution is improved. The parameter $m_{n_\dl}$
for this level of noise is $3$, so one needs to solve a larger
linear algebraic system to get such accuracy. When the noise is
$.5\%$ the average error is improved without increasing the value of
the parameter $m_n$. In this level of noise we get $Avg=0.0452$. The
value of the parameter $m_n$ increases to $4$ as the level of noise
$\dl$ decreases to $0.05\%$. The average error is improved to
$0.0250$. Figure 1 shows the reconstructions with the proposed
iterative scheme for the noise levels: $5\%$, $1\%$, $0.5\%$ and
$0.05\%$.

\begin{figure} \centering
\begin{tabular}{cc}
\epsfig{file=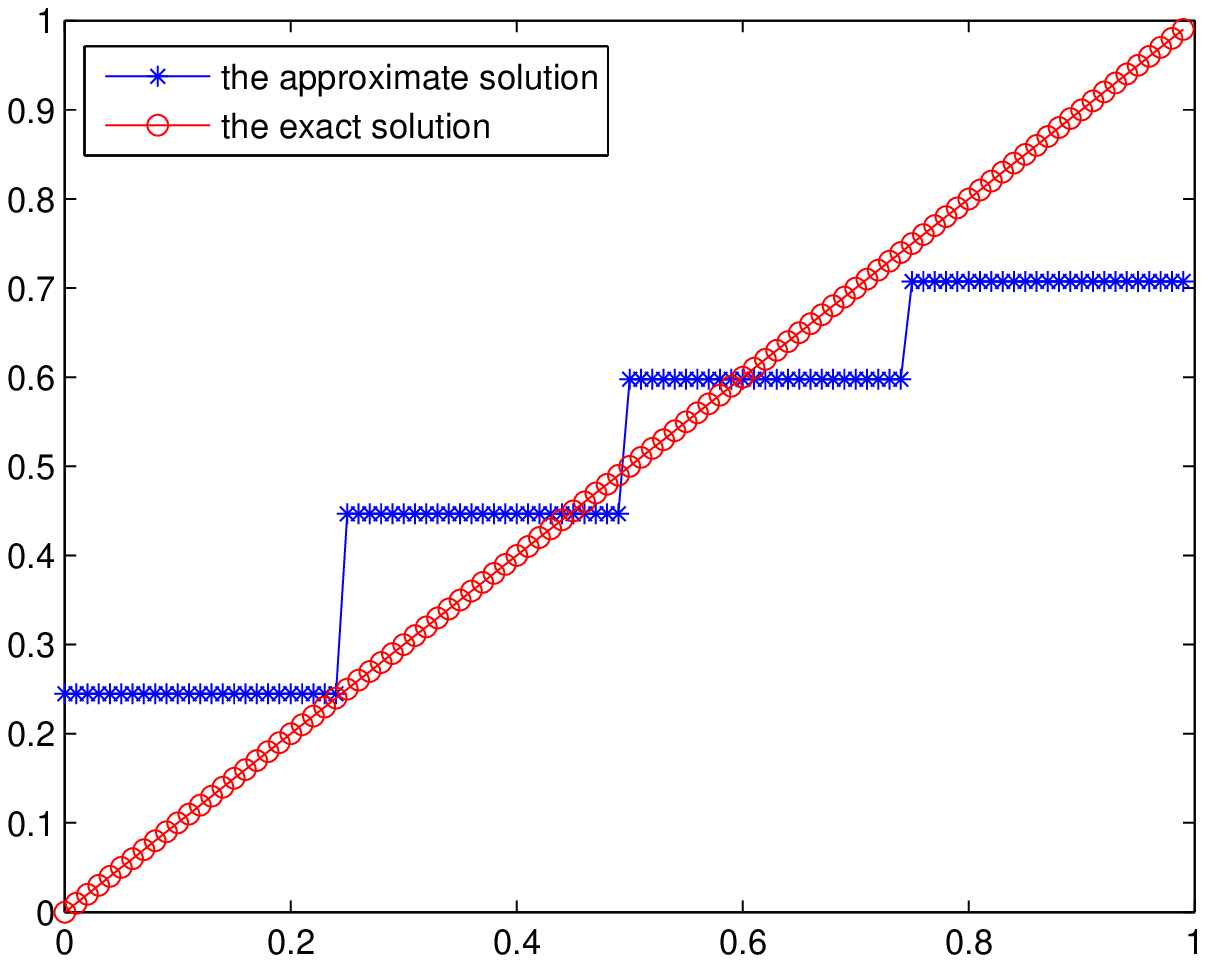,width=5cm,clip=} &
\epsfig{file=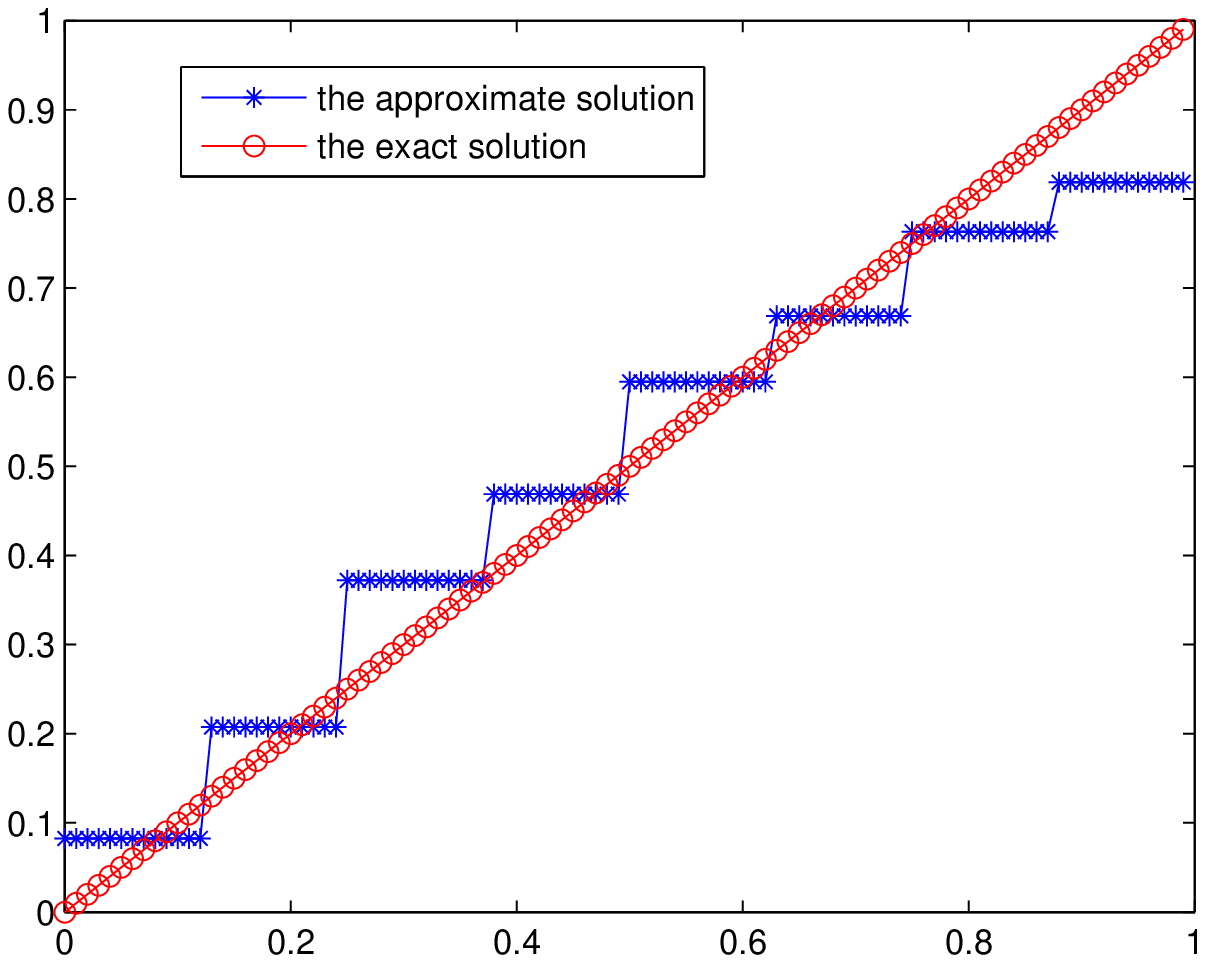,width=5cm,clip=}\\
$\dl=5\%$, $m_{n_\dl}=2$& $\dl=1\%$, $m_{n_\dl}=3$\\
\epsfig{file=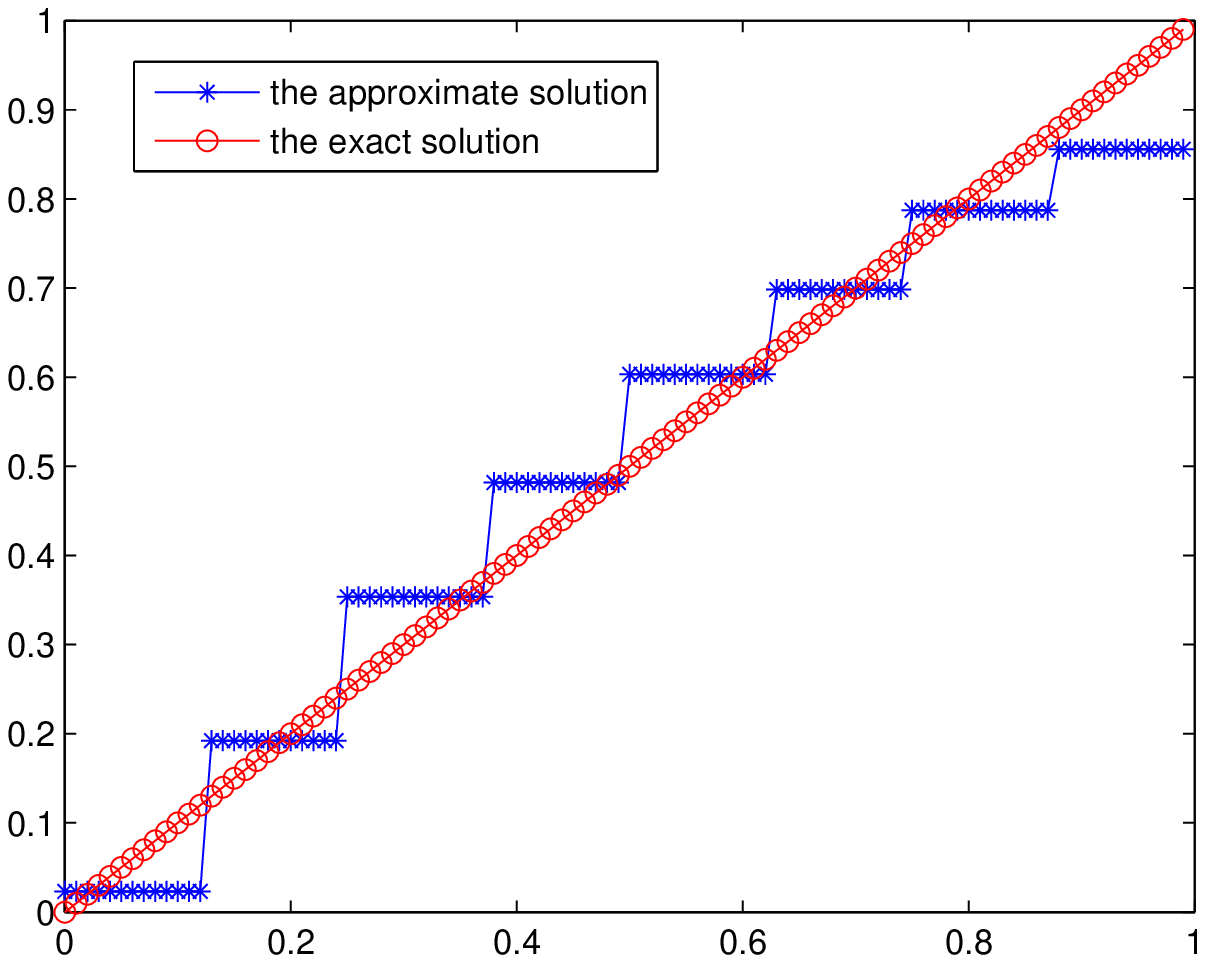,width=5cm,clip=}& \epsfig{file=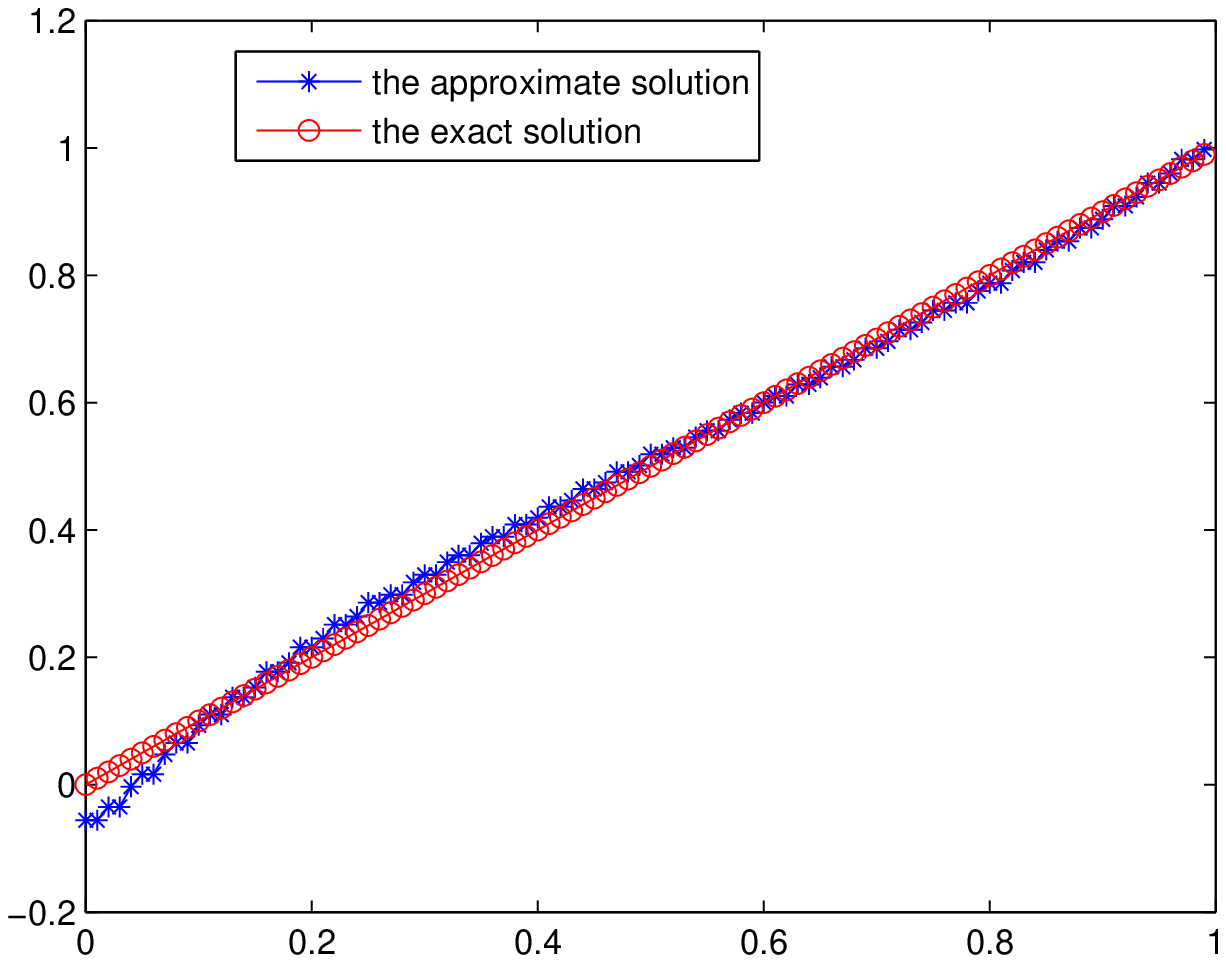,width=5cm,clip=}\\
$\dl=.5\%$, $m_{n_\dl}=3$& $\dl=.05\%$, $m_{n_\dl}=5$\\
\end{tabular}
\caption{Reconstruction of the exact solution $u(t)=t$ using the proposed iterative scheme }\label{fig:fig1}
\end{figure}

We compare the results of the proposed iterative scheme with the
iterative scheme proposed in \cite{SWIAGR09}: \be\label{fit}
u_n^\dl=qu_{n-1}^\dl+(1-q)T_{a_n}^{-1}K^*f_\dl,\quad u_0=0,\quad
a_n=\alpha_0q^n,\ \alpha_0>0.\ee In this iterative scheme we need to
solve the following equation: \be\label{LSf} (a_nI+A)z=A^*f_\dl, \ee
where \be\label{Af}
(A)_{i,j}:=\int_0^1\Phi_i(s)\int_0^1e^{-st}\Phi_j(t)dtds,\quad
i,j=1,2,\hdots,2^m, \ee \be\label{ff} (f_\dl)_i:=\int_0^1
f_\dl(s)\Phi_i(s)ds,\quad i=1,2,\hdots,2^m,\ee and $\Phi_i(x)$ are
the Haar basis functions. In all the experiments the value of the
parameter $m$ in \eqref{Af} and \eqref{ff} is $4$, so the size of
the matrix $A$ in \eqref{LSf} is fixed to $16\times 16$ at each
iteration. The reconstructions obtained by iterative solution
\eqref{fit} are shown in Figure 2.
\begin{figure} \centering
\begin{tabular}{cc}
\epsfig{file=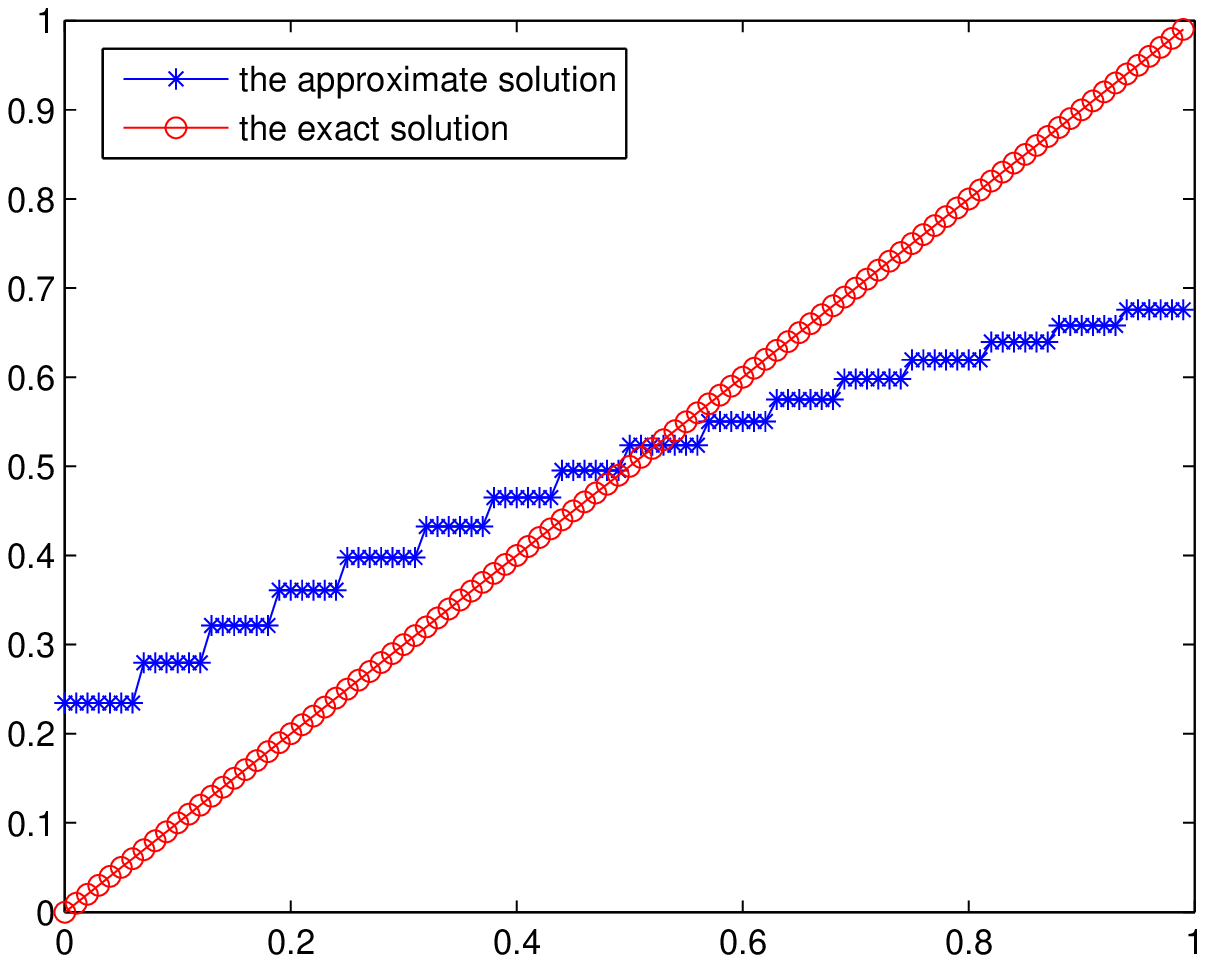,width=5cm,clip=} &
\epsfig{file=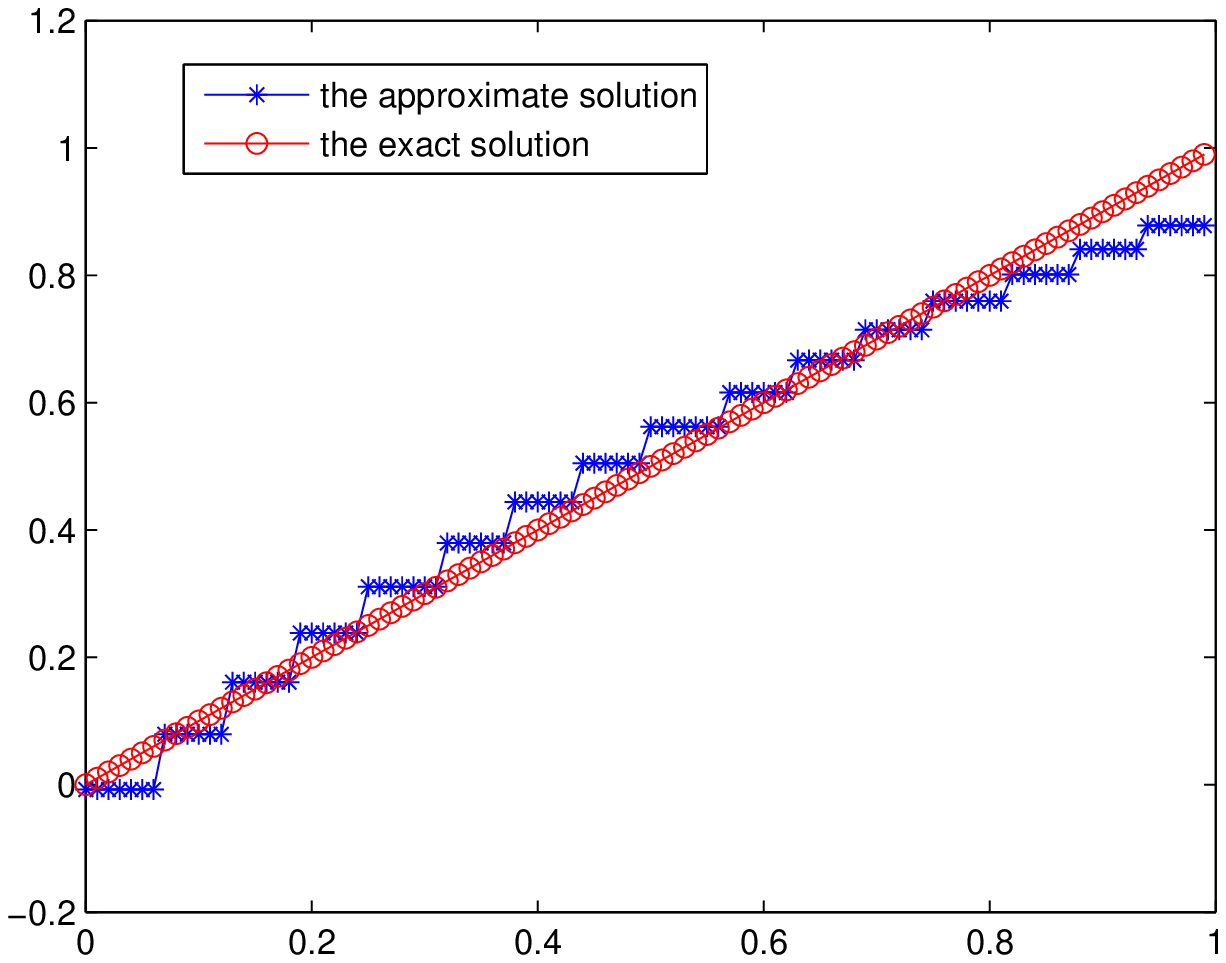,width=5cm,clip=}\\
$\dl=5\%$, $m=4$& $\dl=1\%$, $m=4$\\
\epsfig{file=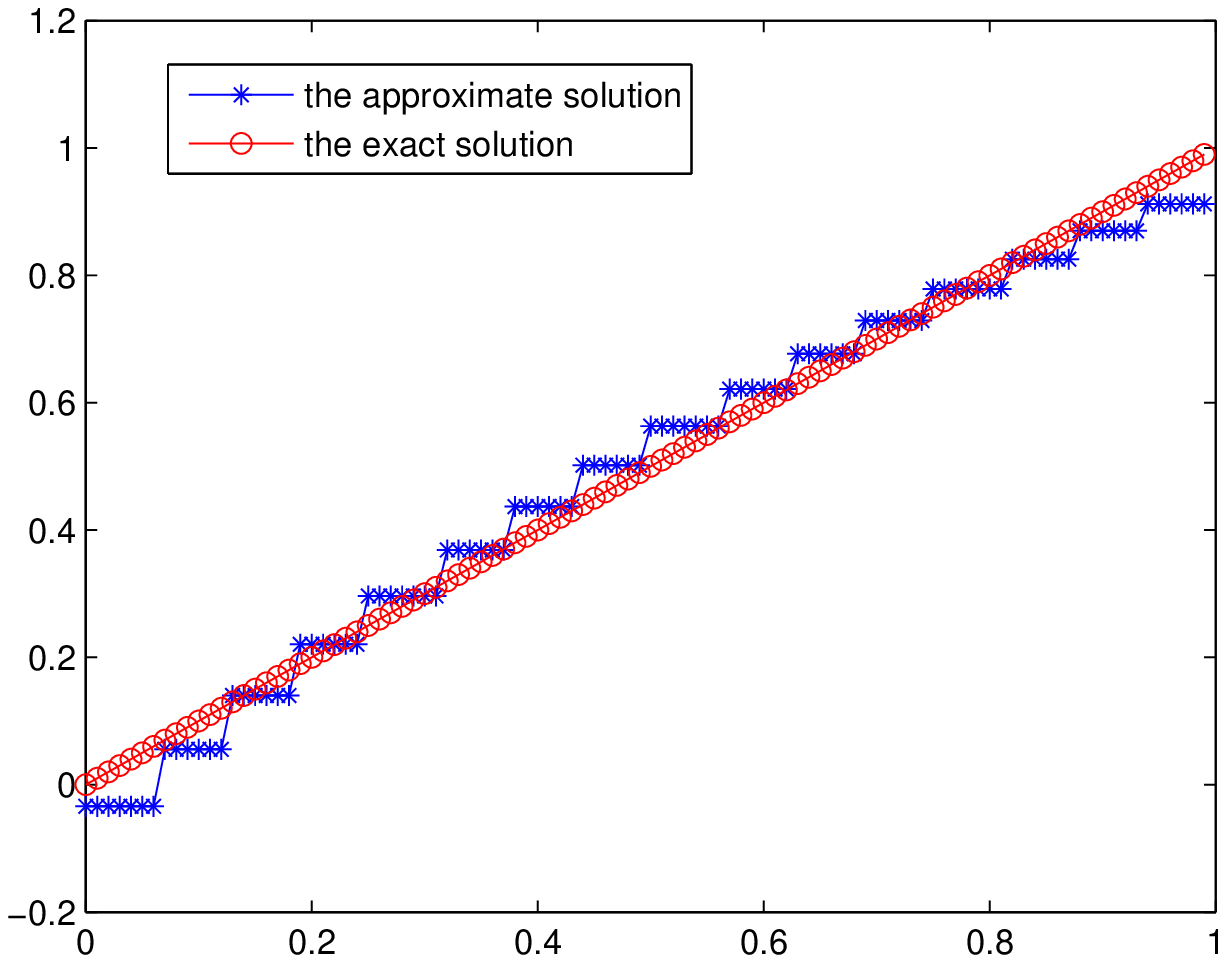,width=5cm,clip=}& \epsfig{file=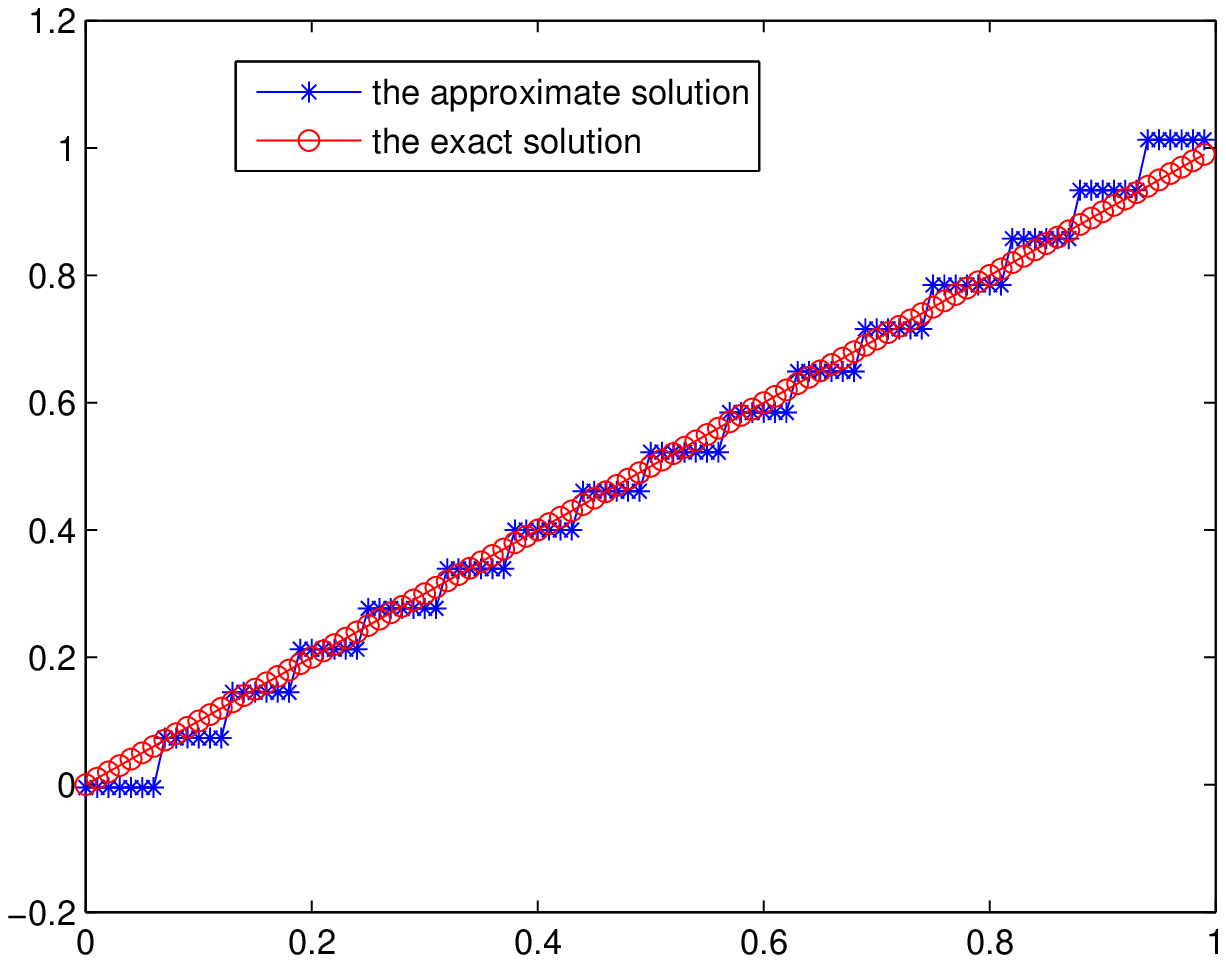,width=5cm,clip=}\\
$\dl=.5\%$, $m=4$& $\dl=.05\%$, $m=4$\\
\end{tabular}
\caption{Reconstruction of the exact solution $u(t)=t$ using iterative scheme \eqref{fit} }\label{fig:fig2}
\end{figure}

In Table 1 we compare the results of the proposed iterative scheme
with of iterative scheme \eqref{fit}. Here the proposed iterative
and iterative scheme \eqref{fit} are denoted by $It_1$ and $It_2$,
respectively. For the levels of noise $5\%,$ $1\%$, $0.5\%$ the CPU
time of iterative scheme \eqref{fit} are larger than of these for
the proposed iterative scheme, since at each iteration of iterative
scheme \eqref{fit} one needs to solve linear algebraic system
\eqref{LSf} with the matrix $A$ of the size $16\times 16$ while in
the proposed iterative scheme one only needs to use smaller sizes of
the matrix $A$ at each iteration. In general the average errors of
the proposed iterative scheme are comparable to of these for
iterative scheme \eqref{fit}.
\begin{table}[htp]\caption{fixed vs adaptive iterative scheme}\label{tab1}
\newcommand{\m}{\hphantom{$-$}}
\renewcommand{\tabcolsep}{.85pc} 
\renewcommand{\arraystretch}{1.2} 
\begin{tabular}{llllllll}
\hline
&\multicolumn{3}{c}{$It_1$}& &\multicolumn{3}{c}{$It_2$}\\
\cline{2-4}\cline{6-8}
$\dl$&$Avg$&$m_{n_\dl}$&CPU time & &$Avg$&$m$&CPUtime\\
& & &(seconds)&&&&(seconds)\\
\hline
$5\%    $&$ 0.1095   $&$ 2   $&$  0.1563  $&&$    0.1346   $&$ 4   $&$ 0.5313  $   \\
$1\%   $&$ 0.0513   $&$ 3   $&$ 0.2188  $&&$    0.0339   $&$ 4   $&$ 0.5313  $   \\
$0.5\%  $&$ 0.0452   $&$ 3  $&$ 0.2344  $&&$    0.0300   $&$ 4  $&$ 0.5469  $   \\
$0.05\% $&$ 0.0250   $&$ 5  $&$ 0.8281  $&&$    0.0206   $&$ 4  $&$  0.5313  $   \\
\hline
\end{tabular}
\end{table}

\section{Conclusion}
A stopping rule with the parameters $m_n$ depending on the
regularization parameters $a_n$ is proposed. The $m_n$ is an
increasing sequence of the regularization parameter $a_n$. This
allows one to start by solving a small size linear algebraic system
\eqref{LAS1}, and one increases the size of the linear algebraic
systems only if $G_n>C\dl^\varepsilon.$ In the numerical example it
is demonstrated that a simple quadrature method, compound Simpson's
quadrature, can be used for approximating the kernel $g(x,z)$,
defined in \eqref{gxz}. Our method yields convergence of the
approximate solution $u_{n,m_\dl}^\dl$ to the minimal norm solution
of \eqref{11}. Numerical experiments show that all the average
errors of the proposed method are comparable to of these for
iterative scheme \eqref{fit}. Our numerical experiments demonstrate
that the adaptive choice of the parameter $m_n$ is more efficient,
in the following sense: the value of the parameters $m_n$ of the
proposed iterative scheme at the noise levels $5\%$,$1\%$ and
$0.5\%$ are smaller than of the parameter $m$, used in  the
iterative scheme \eqref{fit}. Therefore the computational time of
the proposed method at these levels of noise is smaller than the
computational time for the iterative scheme \eqref{fit}. The
adaptive choice of the parameters $m_n$ may give a large size of the
matrix $A_{m_n}$ in \eqref{LAS1}, since $m_n$ is a non-decreasing
sequence depending on the geometric sequence $a_n$,
so the CPU time increases
as the value of the parameter $m_n$ increases. In the iterative
scheme \eqref{fit} the size of the matrix $A$ in \eqref{LSf} is
fixed at each iteration, so the CPU time depends on the number of
iterations. The drawback of using a fixed size $2^m\times 2^m$ of
the matrix $A$ in \eqref{LSf} at each iteration is: the solution
$u_n^\dl$, defined by formula \eqref{fit}, where $n=n(\dl)$ is found
by the stopping rule \eqref{srule} with $m_n=m$ $\forall n$, may
approximate the minimal norm solution on the finite-dimensional
space $L_m=\text{span}\{\Phi_1,\Phi_2,\hdots,\Phi_{2^m}\}$ not
accurately, so that for some levels of the noise the exact solution
to problem \eqref{ILT} will not be well approximated by any function
from $L_m$. From Table 1 one can see that the number of basis functions
used for an approximation of the minimal norm solution with the accuracy
0.1095 by the iterative scheme with the adaptive choice of $m_n$ is four
times smaller than the number of these functions used in the iterative
scheme with a fixed $m$, while the accuracy is 0.1095 in $It_1$ and
$0.1346$ in $It_2$ (see line 1 in Table 1).

\end{document}